\documentclass[11pt]{amsart}

\usepackage[margin=1.15in]{geometry}
\usepackage{amsmath,amssymb,amsthm,mathtools}
\usepackage{mathrsfs}
\usepackage{booktabs,array}
\usepackage{microtype}
\usepackage{enumitem}
\usepackage[hidelinks]{hyperref}
\usepackage[nameinlink,capitalise]{cleveref}
\usepackage{xcolor}

\numberwithin{equation}{section}

\newtheorem{theorem}{Theorem}[section]
\newtheorem{proposition}[theorem]{Proposition}
\newtheorem{lemma}[theorem]{Lemma}
\newtheorem{corollary}[theorem]{Corollary}
\theoremstyle{definition}
\newtheorem{definition}[theorem]{Definition}
\theoremstyle{remark}
\newtheorem{remark}[theorem]{Remark}
\newcommand{\Li}{\operatorname{Li}}
\newcommand{\F}{\mathcal F}
\newcommand{\R}{\mathcal R}
\newcommand{\PP}{\mathbb P}
\newcommand{\CC}{\mathbb C}
\newcommand{\ZZ}{\mathbb Z}

\newcommand{\loc}{\mathrm{loc}}
\newcommand{\con}{\mathrm{con}}

\newcommand{\lcm}{\operatorname{lcm}}

\newcommand{\ord}{\operatorname{ord}}

\title[Difference Equations and GV Reconstruction]{Difference Equations and Reconstruction of Local Gopakumar-Vafa Spectra for Threefold Flops}
\author{Xiaobin Li}
\address{School of Mathematics, Southwest Jiaotong University,
West zone, High-tech district, Chengdu, Sichuan 611756, China}
\date{September 14, 2026}

\begin{document}
\begin{abstract}
We study difference equations for the local Gromov-Witten theory of a contractible rational curve in a Calabi-Yau threefold from the viewpoint of reconstruction. Starting from the Bryan-Katz-Leung multiple cover decomposition, we associate to all nonzero local GV degree sectors on an exceptional ray a single central difference forcing term governed by explicit Fej\'er-type Laurent kernels.

Our main result is an information equivalence theorem: once the exceptional ray and its lattice normalization are fixed, the full quantum forcing, its regular specialization at $\lambda=0$, and the local genus zero Gopakumar-Vafa spectrum determine one another. In particular, every local GV multiplicity is recovered from the classical forcing by an explicit M\"obius inversion. In the primitive normalization relevant to a simple flopping contraction, the same classical forcing also recovers the noncommutative and commutative widths of the contraction algebra: the former is the order at $q=1$ of the exponentiated forcing and the latter is its first $q$-coefficient.

For finite support we further determine the minimal Laurent polynomial denominator clearing operator on the finest common shift lattice. Its cyclotomic factorization is strictly coarser than the GV spectrum. Length two, $E_6$, $E_7$, and $E_8$ examples exhibit the resulting hierarchy of information loss: Koll\'ar length does not determine the forcing, the cyclotomic skeleton does not determine the GV support, and even the complete local difference data do not determine the analytic type of the flop.
\end{abstract}

\keywords{Gromov-Witten invariants, Gopakumar-Vafa invariants, threefold flops, difference equations, contraction algebras}

\maketitle

\section{Introduction and main results}

Difference equations provide a compact organization of the all genus Gromov-Witten theory of the resolved conifold. Let
\[
q=e^{2\pi i t},\qquad h=\frac{\lambda}{2\pi},\qquad D=\frac1{2\pi}\frac{\partial}{\partial t}.
\]
The nonconstant resolved conifold potential is
\[
\Phi_1(\lambda,t)=\sum_{k\ge1}\frac{q^k}{k(2\sin(k\lambda/2))^2}.
\]
Alim proved that it satisfies the central difference equation \cite{Alim2023}
\begin{equation}\label{eq:alim}
\Phi_1(t+h)-2\Phi_1(t)+\Phi_1(t-h)=D^2\Li_3(q).
\end{equation}
He also observed degreewise difference equations for the genus zero GV-resummed contribution of a general Calabi-Yau threefold. More recently, Alim and Tischler have studied a complementary analytic direction for the resolved conifold, in which the resurgent completion of the closed Gromov-Witten potential has a strong coupling expansion governed in part by relative Gromov-Witten invariants \cite{AlimTischler2026}. These developments suggest that difference and $q$-difference structures retain geometric information beyond their role as compact functional identities.

There is also a substantial, largely parallel tradition of difference equations in quantum $K$-theory. Givental and Lee related the quantum $K$-theory of flag manifolds to finite-difference Toda lattices \cite{GiventalLee2003}, while Lee and Pandharipande established a reconstruction theorem in genus zero quantum cohomology and quantum $K$-theory \cite{LeePandharipande2004}. Iritani, Milanov and Tonita later reconstructed big quantum $K$-rings from the $q$-difference module structure \cite{IritaniMilanovTonita2015}, and Ruan and Wen developed the $q$-difference viewpoint through explicit calculations and convergence questions \cite{RuanWen2022}. In the Calabi-Yau threefold setting, Garoufalidis and Scheidegger formulated a conjectural small quantum $K$-theoretic $q$-difference equation for the quintic whose coefficients are expressed linearly in terms of Gopakumar-Vafa invariants \cite{GaroufalidisScheidegger2022}, and Wen studied connection and confluence problems for the $K$-theoretic $q$-difference equation of the quintic \cite{Wen2022}. These quantum $K$-theoretic equations act on $I$- or $J$-functions through multiplicative shifts of Novikov variables. The equations studied here are of a different nature: they act directly on the GV-resummed cohomological Gromov-Witten potential through additive shifts of the K\"ahler parameter determined by the topological string coupling. Correspondingly, our reconstruction problem is raywise and enumerative: it recovers the local genus zero GV multiplicities from the classical specialization of the forcing term, without invoking a quantum $K$-theory difference module structure.

The question addressed here is deliberately local and reconstruction theoretic:
\begin{quote}
\emph{Once a local flopping contribution has been organized by a difference equation, exactly which enumerative and geometric data can be recovered from its forcing term?}
\end{quote}
The answer is sharper than the existence of the difference equation itself. For a fixed exceptional ray, the full $\lambda$-dependent forcing and its classical specialization contain precisely the same genus zero GV information. Moreover, in primitive normalization the classical forcing directly records two numerical invariants of the contraction algebra. Passing further to the minimal denominator clearing operator loses information, and explicit flops realize each loss.

Let $Y$ be a smooth Calabi-Yau threefold and let $C\simeq\PP^1\subset Y$ be a smoothly embedded contractible rational curve. The multiple cover theory of Bryan-Katz-Leung associates to $C$ finitely many scheme theoretic multiplicities in degrees $1\le d\le \ell$, where $1\le \ell\le6$ is Koll\'ar's length \cite{BKL2001,KatzMorrison1992}. In modern GV language, these multiplicities form the local genus zero spectrum $\{n_d\}$. The nonconstant local Gromov-Witten contribution is
\begin{equation}\label{eq:localdecomp-intro}
\F_C^{\loc}=\sum_{d\in S}n_d\Phi_d,
\end{equation}
where
\[
S=\{d\ge1:n_d\ne0\},\qquad
\Phi_d(\lambda,t)=\sum_{k\ge1}\frac{q^{dk}}{k(2\sin(k\lambda/2))^2}.
\]
This finite sector decomposition is the geometric input for the paper. Put
\begin{equation}\label{eq:gdef-intro}
g=\gcd S,
\end{equation}
and define
\[
T_af(t)=f(t+a),\qquad \Delta_g=T_{h/g}-2+T_{-h/g}.
\]
For $m\ge1$ let
\begin{equation}\label{eq:Km-intro}
K_m(z)=\sum_{j=-(m-1)}^{m-1}(m-|j|)z^j
=z^{-(m-1)}(1+z+\cdots+z^{m-1})^2.
\end{equation}
Then
\[
K_m(e^{ix})=\left(\frac{\sin(mx/2)}{\sin(x/2)}\right)^2.
\]
The kernels $K_m$ measure the discrepancy between an individual degree sector and the common gcd lattice.

\subsection*{Main result I: reconstruction and information equivalence}

Set
\[
\R_g(\lambda,q):=\Delta_g\F_C^{\loc},\qquad R_g(q):=\R_g(0,q).
\]
Writing $a_e:=n_{ge}$, the classical forcing is
\begin{equation}\label{eq:intro-classical}
R_g(q)=-\sum_{e\ge1}e^2a_e\Li_1(q^{ge}).
\end{equation}
The divisor convolution in \eqref{eq:intro-classical} can be inverted explicitly.

\begin{theorem}[Arithmetic reconstruction and information equivalence]\label{thm:intro-recon}
Fix the exceptional ray and its lattice normalization $g$. Then the classical forcing series $R_g(q)$ uniquely determines the local genus zero GV spectrum. More precisely,
\begin{equation}\label{eq:intro-moebius}
\boxed{
n_{gN}=-\frac1{N^2}\sum_{r\mid N}\frac{\mu(r)}r
[q^{gN/r}]R_g(q).}
\end{equation}
Consequently the following three data sets mutually determine one another:
\begin{equation}\label{eq:intro-equivalence}
\boxed{
\R_g(\lambda,q)
\Longleftrightarrow
R_g(q)
\Longleftrightarrow
\{n_{gN}\}_{N\ge1}.}
\end{equation}
\end{theorem}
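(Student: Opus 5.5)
The plan is to first make the forcing completely explicit, then invert a divisor convolution, and finally close the loop of implications in \eqref{eq:intro-equivalence}.

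\textbf{Step 1: the forcing term.} Since $\Delta_g$ is linear, it suffices to compute $\Delta_g\Phi_d$ for $d=ge$. Shifting $t\mapsto t\pm h/g$ multiplies $q^{dk}$ by $e^{\pm 2\pi i\, dk h/g}=e^{\pm i k e\lambda}$, so
\[
\Delta_g q^{dk}=q^{dk}\bigl(2\cos(ke\lambda)-2\bigr)=-q^{dk}\,(2\sin(ke\lambda/2))^2 .
\]
Hence
\[
\Delta_g\Phi_{ge}=-\sum_{k\ge1}\frac{q^{gek}}{k}\left(\frac{\sin(ke\lambda/2)}{\sin(k\lambda/2)}\right)^2=-\sum_{k\ge1}\frac{q^{gek}}{k}K_e(e^{ik\lambda}).
\]
This is a power series in $q$ whose coefficients are Laurent polynomials in $e^{i\lambda}$. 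It is regular at $\lambda=0$ because $K_e(1)=e^2$. Specializing gives \eqref{eq:intro-classical}, namely $R_g(q)=-\sum_e e^2a_e\Li_1(q^{ge})$. One point needs care: every $d\in S$ is divisible by $g$, so the sum over $S$ is exactly a sum over $e$ with $a_e=n_{ge}$.

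\textbf{Step 2: Möbius inversion.} Expand $\Li_1(q^{ge})=\sum_k q^{gek}/k$. The coefficient of $q^{gM}$ in $R_g$ is then
\[
c_M:=[q^{gM}]R_g=-\sum_{e\mid M}e^2a_e\,\frac{e}{M}=-\frac1M\sum_{e\mid M}e^3a_e .
\]
All other coefficients vanish. Set $b_e=e^3a_e$, so that $-Mc_M=\sum_{e\mid M}b_e$. Classical Möbius inversion gives
\[
b_N=-\sum_{r\mid N}\mu(r)\frac{N}{r}c_{N/r}.
\]
Dividing by $N^3$ yields $a_N=-\frac1{N^2}\sum_{r\mid N}\frac{\mu(r)}{r}[q^{gN/r}]R_g$, which is \eqref{eq:intro-moebius}. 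The only delicate point is the bookkeeping of the factor $1/k$, which becomes $e/M$. I will check this on $N=1$ and on a prime $N$.

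\textbf{Step 3: the equivalences.} The implication $\R_g\Rightarrow R_g$ is specialization at $\lambda=0$, which is legitimate by the regularity shown in Step 1. The implication $R_g\Rightarrow\{n_{gN}\}$ is Step 2. For $\{n_{gN}\}\Rightarrow\R_g$, note that since $n_d=0$ for $g\nmid d$, the spectrum determines $\F_C^{\loc}$ through \eqref{eq:localdecomp-intro}. The explicit formula of Step 1 then produces $\R_g(\lambda,q)$. The only real subtlety is interpretive: the statement fixes $g$, so no consistency check between $g$ and $\gcd S$ is needed. I expect no genuine obstacle. The step requiring the most care is Step 1, in particular confirming the sign and normalization of the shift, $h/g\cdot 2\pi\cdot ge=e\lambda$, so that the kernel comes out exactly as $K_e$.
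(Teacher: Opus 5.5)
Your proposal is correct and follows essentially the same route as the paper. You use the sector computation $\Delta_g\Phi_{ge}=-\sum_k \frac{q^{gek}}{k}K_e(e^{ik\lambda})$, specialize at $\lambda=0$ via $K_e(1)=e^2$, write the divisor-sum formula $[q^{gM}]R_g=-\frac1M\sum_{e\mid M}e^3a_e$, apply M\"obius inversion, and close the loop by rebuilding $\R_g$ from the spectrum. The only difference is notational: your $b_e=e^3a_e$ is the paper's $f(e)$, and your $-Mc_M$ is the paper's $b_M$.
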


Thus the limit $\lambda\to0$ forgets the nontrivial shift profile of the quantum kernels but not the local genus zero GV spectrum. In this precise sense the classical forcing is already a complete coordinate system for the genus zero local curve counting data on the chosen exceptional ray.

\subsection*{Main result II: the common difference equation}

The functional mechanism behind the reconstruction theorem is a single difference operator acting simultaneously on every nonzero degree sector.

\begin{theorem}[Common difference equation]\label{thm:intro-common}
Let $\F_C^{\loc}$ be as above. Then
\begin{equation}\label{eq:intro-common1}
\boxed{
\Delta_g\F_C^{\loc}
=
D^2\sum_{d\in S}\frac{n_d}{d^2}
K_{d/g}(T_{h/d})\Li_3(q^d).}
\end{equation}
Equivalently,
\begin{equation}\label{eq:intro-common2}
\boxed{
\Delta_g\F_C^{\loc}
=-\sum_{d\in S}n_dK_{d/g}(T_{h/d})\Li_1(q^d).}
\end{equation}
\end{theorem}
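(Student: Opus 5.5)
The plan is to prove \eqref{eq:intro-common2} first and then obtain \eqref{eq:intro-common1} from it. Both sides of \eqref{eq:intro-common2} are linear in the sector decomposition \eqref{eq:localdecomp-intro}, and the operators $T_a$, $\Delta_g$, $K_{d/g}(T_{h/d})$ and $D$ all act diagonally on the monomials $q^{n}$. So it suffices to fix one $d\in S$ and compare coefficients of $q^{dk}$ for each $k\ge1$.

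I will work for $\operatorname{Im} t$ large and real $\lambda$ outside the discrete set where some $\sin(k\lambda/2)$ vanishes. Equivalently, one can regard everything as formal power series in $q$ with coefficients meromorphic in $\lambda$. Under either convention, termwise application of shifts and of $D$ is justified, because real shifts of $t$ do not change $|q^{dk}|$.

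\emph{Step 1: the action of the shifts.} Since $q=e^{2\pi i t}$ and $h=\lambda/2\pi$, we have $T_a q^{n}=e^{2\pi i n a}q^{n}$. Hence
\[
T_{h/g}\,q^{dk}=e^{i(d/g)k\lambda}q^{dk},\qquad T_{h/d}\,q^{dk}=e^{ik\lambda}q^{dk}.
\]
Because $g\mid d$, the integer $m:=d/g$ is well defined. The first identity then gives
\[
\Delta_g q^{dk}=\bigl(2\cos(mk\lambda)-2\bigr)q^{dk}=-\bigl(2\sin(mk\lambda/2)\bigr)^2q^{dk}.
\]

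\emph{Step 2: the single-sector identity.} Dividing the eigenvalue from Step 1 by the denominator $\bigl(2\sin(k\lambda/2)\bigr)^2$ of $\Phi_d$, we get
\[
\Delta_g\Phi_d=-\sum_{k\ge1}\frac{q^{dk}}{k}\left(\frac{\sin(mk\lambda/2)}{\sin(k\lambda/2)}\right)^2=-\sum_{k\ge1}\frac{q^{dk}}{k}K_m(e^{ik\lambda}).
\]
The second equality uses the identity $K_m(e^{ix})=\bigl(\sin(mx/2)/\sin(x/2)\bigr)^2$ stated after \eqref{eq:Km-intro}. By the second shift formula in Step 1, $K_m(e^{ik\lambda})q^{dk}=K_m(T_{h/d})q^{dk}$, since $K_m$ is a Laurent polynomial in its argument. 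Therefore
\[
\Delta_g\Phi_d=-K_{d/g}(T_{h/d})\Li_1(q^d).
\]
Summing this over $d\in S$ with weights $n_d$ yields \eqref{eq:intro-common2}.

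\emph{Step 3: from \eqref{eq:intro-common2} to \eqref{eq:intro-common1}.} We have $Dq^{n}=in\,q^{n}$, so $D^2q^{dk}=-d^2k^2q^{dk}$. Applying this to $\Li_3(q^d)=\sum_k q^{dk}/k^3$ gives $D^2\Li_3(q^d)=-d^2\Li_1(q^d)$. Since $D$ commutes with every translation $T_a$, it commutes with $K_{d/g}(T_{h/d})$. Substituting into \eqref{eq:intro-common2} therefore produces \eqref{eq:intro-common1}.

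\emph{Main obstacle.} The computation itself is routine. The point requiring care is the bookkeeping of the two different shift lattices: the operator $\Delta_g$ shifts $t$ by $h/g$, while the kernel $K_{d/g}$ is evaluated on shifts by $h/d$. What makes the Fej\'er-type quotient an honest Laurent polynomial in $T_{h/d}$ is that $m=d/g$ is an integer; this is exactly where the hypothesis $g=\gcd S$ enters. A secondary issue is the analytic meaning near the zeros of $\sin(k\lambda/2)$. I would handle this by stating the identity as one between formal $q$-series. In that setting the right-hand side is then visibly regular in $\lambda$, which also justifies the specialization $\lambda=0$ used for $R_g$.
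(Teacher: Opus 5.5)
Your proof is correct and follows the paper's own argument. The paper proves the sector identity $\Delta_g\Phi_d=-K_{d/g}(T_{h/d})\Li_1(q^d)$ (Lemma~\ref{lem:sector}) by the same mode computation, namely $T_{h/g}q^{dk}=e^{imk\lambda}q^{dk}$ and $T_{h/d}q^{dk}=e^{ik\lambda}q^{dk}$ together with the Fej\'er identity. It then obtains the $D^2\Li_3$ form from $D^2\Li_3(q^d)=-d^2\Li_1(q^d)$ and the fact that $D$ commutes with shifts, and sums over $d\in S$.

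One correction to your analytic aside: the set of $\lambda$ where some $\sin(k\lambda/2)$ vanishes is $2\pi\mathbb{Q}$, which is dense, not discrete. Because of small divisors, the pointwise analytic convention needs more care, so the formal $q$-series convention you also offer is the one to keep; it is the setting the paper uses, with $\Phi_d\in\CC((\lambda))[[q]]$.
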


For $S=\{1\}$ this reduces to \eqref{eq:alim}. The point is not merely that every degree has its own equation: all sectors are placed on a single lattice determined by $g$, while their arithmetic mismatch is encoded by the finite Fej\'er kernels $K_{d/g}$.

\subsection*{Main result III: contraction algebra widths from the forcing}

For a simple flopping contraction in primitive curve class normalization, write $g=1$ and set
\[
\mathscr E_C(q):=\exp(R_1(q)).
\]
Using $\Li_1(z)=-\log(1-z)$ gives the finite product
\begin{equation}\label{eq:intro-E-product}
\mathscr E_C(q)=\prod_{d\in S}(1-q^d)^{d^2n_d}.
\end{equation}
Toda's formula expresses the noncommutative and commutative widths of the flopping curve in terms of the same GV spectrum \cite{Toda2015}.

\begin{proposition}[Contraction algebra bridge]\label{prop:intro-contraction}
Let $A_{\mathrm{con}}$ be the contraction algebra of a one-curve flopping contraction. In primitive normalization,
\begin{equation}\label{eq:intro-widths}
\boxed{
\ord_{q=1}\mathscr E_C(q)=\dim_{\CC}A_{\mathrm{con}},
\qquad
-[q]R_1(q)=\dim_{\CC}A_{\mathrm{con}}^{\mathrm{ab}}.}
\end{equation}
In particular, the classical difference forcing determines both numerical widths of the contraction algebra.
\end{proposition}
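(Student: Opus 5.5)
The plan is to compute both quantities directly from the product formula \eqref{eq:intro-E-product} and the classical forcing \eqref{eq:intro-classical} with $g=1$, and then match them with Toda's formulas \cite{Toda2015}. For a one-curve flopping contraction in primitive normalization, these read
\[
\dim_{\CC}A_{\mathrm{con}}=\sum_{d\ge1}d^2n_d,
\qquad
\dim_{\CC}A_{\mathrm{con}}^{\mathrm{ab}}=n_1 .
\]
The only geometric input is therefore Toda's theorem, together with the identification of the BKL multiplicities $n_d$ with Toda's genus zero GV invariants of the multiples of the flopping curve. I will state this identification explicitly, since the lattice normalization matters here: we need $g=1$, and the $n_d$ must be indexed by multiples $d[C]$ of the primitive class.

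For the first identity, I expand each factor of \eqref{eq:intro-E-product} at $q=1$. Write $1-q^d=(1-q)(1+q+\cdots+q^{d-1})$. The second factor takes the value $d\neq0$ at $q=1$, so $\ord_{q=1}(1-q^d)=1$. Since $S$ is finite, $\mathscr E_C$ is a finite product of such factors, and the orders add:
\[
\ord_{q=1}\mathscr E_C(q)=\sum_{d\in S}d^2n_d .
\]
By Toda this equals $\dim A_{\mathrm{con}}$. I will also remark that all $n_d\ge0$ here, since they are BKL multiplicities of the flopping curve. Even without this, the order is simply the integer $\sum d^2n_d$.

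For the second identity, I use $\Li_1(z)=\sum_{k\ge1}z^k/k$. In $R_1(q)=-\sum_{d}d^2n_d\Li_1(q^d)$, the coefficient of $q^1$ comes only from $d=1$, $k=1$, so $[q]R_1=-n_1$. Alternatively this is the case $N=1$ of \eqref{eq:intro-moebius} in \cref{thm:intro-recon}. Hence $-[q]R_1=n_1=\dim A_{\mathrm{con}}^{\mathrm{ab}}$.

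The computation itself is routine. The step needing care is citing Toda's formulas in the correct form. The commutative width statement, $\dim A^{\mathrm{ab}}_{\mathrm{con}}=n_1$, comes from the identification of $A^{\mathrm{ab}}_{\mathrm{con}}$ with the scheme-theoretic width of $C$, which is Reid's width, combined with the BKL/Katz computation $n_1=\mathrm{width}$. I will cite this alongside Toda's theorem rather than reprove it.

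The final sentence of the proposition then follows: both widths are read off from $R_1$, one through its exponential and one through its first coefficient.
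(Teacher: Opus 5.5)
Your proposal is correct and follows the paper's proof: compute $\ord_{q=1}$ of the finite product $\prod_{d}(1-q^d)^{d^2n_d}$ to get $\sum_d d^2n_d$, observe that only the $d=1$ sector contributes to $[q]R_1(q)$, which gives $n_1$, and then invoke Toda's formulas $\dim_{\CC}A_{\mathrm{con}}=\sum_d d^2n_d$ and $\dim_{\CC}A_{\mathrm{con}}^{\mathrm{ab}}=n_1$. Your added care about the primitive-class indexing and your sourcing of $n_1=\dim A_{\mathrm{con}}^{\mathrm{ab}}$ are fine refinements. They do not change the argument.
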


This connects the difference equation package directly to noncommutative flop geometry: the forcing does not recover the contraction algebra itself, but it recovers two of its fundamental numerical invariants.

\subsection*{Cyclotomic compression and information loss}

The gcd lattice is adapted to the common central difference equation. For simultaneous denominator clearing it is more natural to use the finest common shift lattice
\[
L=\lcm(S),\qquad Z=T_{h/L}.
\]
A Laurent polynomial $P(Z)\in\CC[Z^{\pm1}]$ is a denominator clearing operator if
\[
(Z^{L/d}-1)^2\mid P(Z)\qquad\text{for every }d\in S.
\]
Minimality is with respect to divisibility modulo Laurent units.

\begin{theorem}[Cyclotomic minimality]\label{thm:intro-cyclo}
For a finite nonempty support $S$, the unique minimal denominator clearing operator, up to a Laurent unit, is
\begin{equation}\label{eq:intro-pmin}
\boxed{
P_S^{\min}(Z)=\lcm_{d\in S}(Z^{L/d}-1)^2.}
\end{equation}
If
\[
\mathscr C(S)=\bigcup_{d\in S}\{r\ge1:r\mid L/d\},
\]
then
\begin{equation}\label{eq:intro-cyclofac}
\boxed{
P_S^{\min}(Z)=\prod_{r\in\mathscr C(S)}\Phi_r(Z)^2}
\end{equation}
up to a Laurent unit. If $1\in S$, then $P_S^{\min}(Z)=(Z^L-1)^2$ up to a Laurent unit, with symmetric representative $\Delta_1=T_h-2+T_{-h}$.
\end{theorem}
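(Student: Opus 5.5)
The plan is to work entirely in the Laurent polynomial ring $\CC[Z^{\pm1}]$. It is a principal ideal domain whose units are exactly $cZ^k$ with $c\in\CC^\times$, so divisibility modulo Laurent units behaves as in $\CC[Z]$ once monomial factors are discarded. By definition, $P$ is denominator clearing if and only if $P$ is a common multiple of the finite family $\{(Z^{L/d}-1)^2\}_{d\in S}$. The set of such $P$ is the intersection of the principal ideals generated by these elements, and this intersection is again principal, generated by their least common multiple. Any generator of the intersection divides every denominator clearing operator and is itself denominator clearing. Two minimal elements therefore divide one another and differ by a unit. This gives existence and uniqueness of $P_S^{\min}$ and the formula \eqref{eq:intro-pmin}.

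For the cyclotomic factorization \eqref{eq:intro-cyclofac}, I will use the classical identity $Z^m-1=\prod_{r\mid m}\Phi_r(Z)$. The $\Phi_r$ are pairwise coprime irreducibles in $\CC[Z]$: each is a product of distinct linear factors $Z-\zeta$ over primitive $r$-th roots $\zeta$, so different $r$ give disjoint root sets. None of them is a unit in $\CC[Z^{\pm1}]$, since $\Phi_r(0)\neq0$ and $\Phi_r$ is nonconstant. Hence
\[
(Z^{L/d}-1)^2=\prod_{r\mid L/d}\Phi_r(Z)^2
\]
is a factorization into pairwise coprime squared factors. The lcm is computed by taking, for each linear factor $Z-\zeta$, the maximal exponent occurring in the family. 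A primitive $r$-th root $\zeta$ occurs with exponent $2$ exactly when $r\mid L/d$ for some $d\in S$, that is, when $r\in\mathscr C(S)$, and with exponent $0$ otherwise. This yields
\[
\prod_{r\in\mathscr C(S)}\Phi_r(Z)^2 .
\]
Equivalently, $P_S^{\min}$ has exactly the roots $\bigcup_{d\in S}\mu_{L/d}$, each with multiplicity $2$.

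For the case $1\in S$, note that every $d\in S$ divides $L$, so $L/d\mid L$. Then $Z^{L/d}-1$ divides $Z^L-1$, and $(Z^L-1)^2$, which is the member of the family with $d=1$, is a common multiple. It is therefore the lcm. Equivalently, $\mathscr C(S)=\{r: r\mid L\}$ because every divisor of $L/d$ divides $L$. For the symmetric representative,
\[
(Z^L-1)^2=Z^L\,(Z^L-2+Z^{-L}),
\]
and $Z^{L}=T_{h}$, so up to the unit $Z^L$ this is $T_h-2+T_{-h}=\Delta_1$.

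I expect no genuine obstacle. The only point needing care is that minimality is taken modulo Laurent units rather than polynomial units. Passing to $\CC[Z^{\pm1}]$ removes all powers of $Z$ but no cyclotomic factor, since $Z\nmid\Phi_r$. The lcm in the Laurent ring therefore agrees with the polynomial lcm up to $Z^k$, and the root-multiplicity argument transfers verbatim.
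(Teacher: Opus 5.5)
Your proposal is correct and follows essentially the same route as the paper: a denominator clearing operator is a common multiple of the $(Z^{L/d}-1)^2$, hence divisible by their lcm, and $Z^m-1=\prod_{r\mid m}\Phi_r(Z)$ gives the cyclotomic form. For $1\in S$ the $d=1$ factor $(Z^L-1)^2$ dominates, with $Z^{-L}(Z^L-1)^2=T_h-2+T_{-h}$, and your PID and root-multiplicity bookkeeping only makes explicit what the paper leaves implicit. One wording slip: $\Phi_r$ is irreducible over $\mathbb{Q}$ but not over $\CC$ for $r\ge3$, where it splits into distinct linear factors (as you yourself note), though nothing breaks because your lcm computation only uses the linear factors $Z-\zeta$.
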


The reconstruction theorem and the cyclotomic theorem therefore describe opposite ends of the same hierarchy: the forcing retains the complete genus zero GV spectrum, while the minimal denominator clearing operator is a compressed shadow of it.

\subsection*{Geometric examples}

We apply the theory to the resolved conifold, the standard Laufer flop, and higher length $E_6$, $E_7$, and $E_8$ constructions \cite{CollinucciEtAl2022}. For the length two spectrum $(5,1)$,
\begin{equation}\label{eq:intro-laufer}
\Delta_1\F_L^{\loc}
=
\log\!\left[(1-q)^5(1-q^2)^2(1-q^2e^{i\lambda})(1-q^2e^{-i\lambda})\right].
\end{equation}
The higher length models realize successively the kernels $K_3,K_4,K_5,K_6$ and exhibit three distinct failures of reconstruction after one passes to coarser data:
\begin{equation}\label{eq:intro-hierarchy}
\boxed{
\R_g\Longleftrightarrow R_g\Longleftrightarrow\{n_d\}
\Longrightarrow S\Longrightarrow P_S^{\min}.}
\end{equation}
The first three objects carry the same genus zero enumerative information, but both subsequent arrows are genuinely lossy. The length five and length six $E_8$ examples have different supports and the same cyclotomic skeleton. At the opposite end, Brown and Wemyss constructed analytically non-isomorphic flops with identical GV invariants \cite{BrownWemyss2018}; by the reconstruction theorem they therefore have identical full and classical forcing terms. Thus
\begin{equation}\label{eq:intro-fullhierarchy}
\boxed{
\text{analytic flop geometry}
\longrightarrow
\bigl(\R_g\Longleftrightarrow R_g\Longleftrightarrow\{n_d\}\bigr)
\longrightarrow S\longrightarrow P_S^{\min},}
\end{equation}
and the explicit examples in Section~\ref{sec:examples} show that the reverse implications fail at the indicated stages.

The paper is organized as follows: Section~\ref{sec:local} derives the common gcd lattice difference equation from the local multiple cover decomposition. Section~\ref{sec:arithmetic} proves M\"obius and quantum reconstruction, establishes information equivalence, relates the classical forcing to contraction algebra widths, and determines the minimal cyclotomic denominator clearing operator. Section~\ref{sec:examples} treats explicit threefold flops and realizes the information losses geometrically. A short final section places these results in the broader difference equation and local curve counting picture.

\section{Local Gromov-Witten potentials and difference kernels}\label{sec:local}

\subsection{Local multiple cover contributions}

Let $Y$ be a smooth Calabi-Yau threefold and let $C\simeq\PP^1\subset Y$ be a smoothly embedded contractible rational curve. Bryan-Katz-Leung associate to $C$ a finite sequence of Hilbert scheme multiplicities $k_1,\dots,k_\ell$, where $1\le\ell\le6$, and prove the corresponding arbitrary genus local multiple cover formula \cite{BKL2001}. In the genus zero GV interpretation one writes $n_d=k_d$; see also \cite{Katz2008,Toda2015}.

\begin{proposition}\label{prop:decomp}
The nonconstant local Gromov-Witten contribution associated with $C$ has the form
\begin{equation}\label{eq:decomp}
\boxed{
\F_C^{\loc}(\lambda,t)=\sum_{d=1}^{\ell}n_d\Phi_d(\lambda,t),}
\end{equation}
where
\begin{equation}\label{eq:Phid}
\boxed{
\Phi_d(\lambda,t)=\sum_{k\ge1}\frac{q^{dk}}{k(2\sin(k\lambda/2))^2}.}
\end{equation}
Equivalently, if $S=\{d:n_d\ne0\}$, then $\F_C^{\loc}=\sum_{d\in S}n_d\Phi_d$.
\end{proposition}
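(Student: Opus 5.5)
The plan is to derive the decomposition directly from the Bryan--Katz--Leung local multiple cover formula, rewritten in GV form. BKL show that the local contribution of $C$ to the all-genus Gromov--Witten potential, in classes $k[C]$, is governed by the Hilbert scheme multiplicities $k_1,\dots,k_\ell$ \cite{BKL2001}. Concretely, each $k_d$ contributes exactly as $k_d$ copies of a super-rigid $(-1,-1)$ curve of degree $d$. So the first step is to record the BKL formula in the form
\[
\F_C^{\loc}=\sum_{g\ge0}\lambda^{2g-2}\sum_{d=1}^{\ell}k_d\sum_{k\ge1}C(g,k)\,q^{dk},
\]
where $C(g,k)$ is the Faber--Pandharipande multiple cover coefficient of the rigid conifold curve.

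The second step is to resum in genus. I will use the Faber--Pandharipande identity for the conifold, which is the $S=\{1\}$ case already used in \eqref{eq:alim}:
\[
\sum_{g\ge0}\lambda^{2g-2}C(g,k)=\frac{1}{k\,(2\sin(k\lambda/2))^2}.
\]
Substituting $q\mapsto q^d$ turns the degree-$d$ block into exactly $\Phi_d(\lambda,t)$ of \eqref{eq:Phid}. Summing over $d$ then gives \eqref{eq:decomp} with $k_d$ in place of $n_d$.

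The third step is to identify $k_d=n_d$. This is the genus zero GV interpretation cited from \cite{Katz2008,Toda2015}. Alternatively, compare the $\lambda^{-2}$ coefficients: the genus zero part of $\sum_d k_d\Phi_d$ is $\sum_d k_d\Li_3(q^d)$, the Aspinwall--Morrison form defining local genus zero GV invariants. The final equivalent form $\sum_{d\in S}n_d\Phi_d$ is immediate, since the terms with $n_d=0$ vanish.

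The main obstacle is bookkeeping rather than mathematics. We must check that the BKL statement applies in all genera and that the conventions agree. In particular, the nonconstant potential must exclude the constant-map (degree zero) terms, and the normalization $(2\sin(k\lambda/2))^{-2}$ must match BKL's $\lambda$-convention. I would settle both by checking the $d=1$, $\ell=1$ case against the resolved conifold potential $\Phi_1$ and Alim's equation \eqref{eq:alim}.
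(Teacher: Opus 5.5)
Your proposal is correct and follows the same route as the paper. The paper gives no separate proof: it obtains the proposition directly from the Bryan--Katz--Leung all-genus local multiple cover formula together with the genus zero identification $n_d=k_d$. Your explicit Faber--Pandharipande resummation $\sum_{g\ge0}\lambda^{2g-2}C(g,k)=1/\bigl(k(2\sin(k\lambda/2))^2\bigr)$, via $C(g,k)=k^{2g-3}C(g,1)$, simply writes out the step the paper leaves inside that citation.
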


\begin{remark}
The role of \cref{prop:decomp} is geometric: it supplies the finite collection of local degree sectors. The purpose of the difference formalism is to solve two global problems across these sectors simultaneously---to construct one common forcing equation and to determine exactly which local enumerative data can be reconstructed from it. We deliberately refer to $\F_C^{\loc}$ as the \emph{nonconstant local contribution}; constant map terms and global compactification data play no role in the present discussion.
\end{remark}

\subsection{Shift operators}

Set $h=\lambda/(2\pi)$ and define $T_af(t)=f(t+a)$. Since $q=e^{2\pi it}$,
\begin{equation}\label{eq:shiftmode}
T_aq^N=e^{2\pi iNa}q^N.
\end{equation}
For each $d\ge1$, define the elementary central difference
\begin{equation}\label{eq:Deltad}
\Delta_d=T_{h/d}-2+T_{-h/d}.
\end{equation}
On $q^{dk}$,
\begin{equation}\label{eq:Deltadmode}
\Delta_dq^{dk}=-4\sin^2(k\lambda/2)q^{dk}.
\end{equation}
Consequently
\begin{equation}\label{eq:sector-basic}
\Delta_d\Phi_d=-\Li_1(q^d).
\end{equation}
Moreover, since $Dq^N=iNq^N$,
\begin{equation}\label{eq:D2Li3}
D^2\Li_3(q^d)=-d^2\Li_1(q^d),
\end{equation}
and therefore
\begin{equation}\label{eq:sector-D2}
\boxed{
\Delta_d\Phi_d=\frac1{d^2}D^2\Li_3(q^d).}
\end{equation}
Equation \eqref{eq:sector-D2} is the degree $d$ version of the resolved conifold identity. The problem is to replace the family $\{\Delta_d\}$ by one operator acting simultaneously on every nonzero sector.

\subsection{Multiple cover kernels}

Let $S=\{d:n_d\ne0\}$ and $g=\gcd S$. Every $d\in S$ can be written uniquely as $d=gm$ with $m\ge1$. Define
\[
\Delta_g=T_{h/g}-2+T_{-h/g}.
\]
For $m\ge1$ define
\begin{equation}\label{eq:Km}
K_m(z)=\sum_{j=-(m-1)}^{m-1}(m-|j|)z^j.
\end{equation}

\begin{lemma}\label{lem:kernel}
For every $m\ge1$,
\begin{equation}\label{eq:Kfactor}
\boxed{
K_m(z)=z^{-(m-1)}(1+z+\cdots+z^{m-1})^2.}
\end{equation}
In particular,
\begin{equation}\label{eq:Ktrig}
\boxed{
K_m(e^{ix})=\left(\frac{\sin(mx/2)}{\sin(x/2)}\right)^2.}
\end{equation}
\end{lemma}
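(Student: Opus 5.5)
The factorization \eqref{eq:Kfactor} is a polynomial identity in $\ZZ[z^{\pm1}]$, so I would prove it by comparing coefficients. Write $(1+z+\cdots+z^{m-1})^2=\sum_{a,b=0}^{m-1}z^{a+b}$. The coefficient of $z^s$, for $0\le s\le 2m-2$, is the number of pairs $(a,b)\in\{0,\dots,m-1\}^2$ with $a+b=s$. This count is $s+1$ for $s\le m-1$ and $2m-1-s$ for $s\ge m-1$, which gives $m-|s-(m-1)|$ in both ranges. Multiplying by $z^{-(m-1)}$ and setting $j=s-(m-1)$ turns the exponent range into $-(m-1)\le j\le m-1$ with coefficient $m-|j|$. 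This is exactly the definition \eqref{eq:Km}.

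For \eqref{eq:Ktrig}, substitute $z=e^{ix}$ into \eqref{eq:Kfactor} and assume first that $e^{ix}\neq1$. The geometric sum gives
\[
1+z+\cdots+z^{m-1}=\frac{z^m-1}{z-1}=\frac{e^{imx/2}}{e^{ix/2}}\cdot\frac{\sin(mx/2)}{\sin(x/2)},
\]
using $e^{iy}-1=2i\,e^{iy/2}\sin(y/2)$. Squaring produces the phase $e^{i(m-1)x}$, and the prefactor $z^{-(m-1)}=e^{-i(m-1)x}$ cancels it. What remains is $\bigl(\sin(mx/2)/\sin(x/2)\bigr)^2$.

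At points where $\sin(x/2)=0$, the right side of \eqref{eq:Ktrig} is read as its continuous extension, which equals $m^2$. The left side there is $K_m(1)=\sum_j(m-|j|)=m^2$, so the identity holds at these points as well.

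No step is a real obstacle. The only care needed is the phase cancellation, which is what makes $K_m(e^{ix})$ real and nonnegative, and the removable singularity at $x\in2\pi\ZZ$.
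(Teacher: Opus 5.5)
Your proof is correct and follows the paper's route. Both arguments use the triangular coefficient count for the square of the geometric sum, then the closed form $1+e^{ix}+\cdots+e^{i(m-1)x}=e^{i(m-1)x/2}\sin(mx/2)/\sin(x/2)$, whose phase cancels against $z^{-(m-1)}$. You also spell out the removable singularity at $x\in2\pi\ZZ$, where both sides equal $m^2$, which the paper leaves implicit.
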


\begin{proof}
The coefficient of $z^r$ in $(1+z+\cdots+z^{m-1})^2$ is triangular. After multiplication by $z^{-(m-1)}$, the coefficient of $z^j$, $-(m-1)\le j\le m-1$, is $m-|j|$, proving \eqref{eq:Kfactor}. For $z=e^{ix}$,
\[
1+e^{ix}+\cdots+e^{i(m-1)x}=e^{i(m-1)x/2}\frac{\sin(mx/2)}{\sin(x/2)},
\]
and \eqref{eq:Ktrig} follows.
\end{proof}

The first kernels are
\[
K_1(z)=1,
\qquad
K_2(z)=z+2+z^{-1},
\]
\[
K_3(z)=z^2+2z+3+2z^{-1}+z^{-2}.
\]

\subsection{Sector cancellation on the gcd lattice}

The following identity is the basic computational mechanism of the paper.

\begin{lemma}[Sector cancellation]\label{lem:sector}
Let $d=gm$. Then
\begin{equation}\label{eq:sector-kernel}
\boxed{
\Delta_g\Phi_d=-K_m(T_{h/d})\Li_1(q^d).}
\end{equation}
Equivalently,
\begin{equation}\label{eq:sector-kernel-D2}
\boxed{
\Delta_g\Phi_d=\frac1{d^2}D^2K_m(T_{h/d})\Li_3(q^d).}
\end{equation}
\end{lemma}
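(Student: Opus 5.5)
The plan is to verify \eqref{eq:sector-kernel} mode by mode on the monomials $q^{dk}$, using that both sides are built from operators that act diagonally on such monomials by \eqref{eq:shiftmode}. The equivalence with \eqref{eq:sector-kernel-D2} is then immediate from \eqref{eq:D2Li3}, since $D^2$ commutes with the shift operators $T_a$ and hence with $K_m(T_{h/d})$.

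First I compute the eigenvalue of $\Delta_g$ on $q^{dk}$. By \eqref{eq:shiftmode}, $T_{\pm h/g}q^{dk}=e^{\pm 2\pi i dk h/g}q^{dk}=e^{\pm i mk\lambda}q^{dk}$, because $d/g=m$ and $2\pi h=\lambda$. Hence
\[
\Delta_g q^{dk}=(e^{imk\lambda}-2+e^{-imk\lambda})q^{dk}=-4\sin^2(mk\lambda/2)\,q^{dk}.
\]
Dividing by $k(2\sin(k\lambda/2))^2$ and summing over $k$ gives
\[
\Delta_g\Phi_d=-\sum_{k\ge1}\frac{1}{k}\left(\frac{\sin(mk\lambda/2)}{\sin(k\lambda/2)}\right)^2 q^{dk}.
\]

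Next I compute the eigenvalue of the kernel operator. Since $T_{h/d}q^{dk}=e^{2\pi i k h}q^{dk}=e^{ik\lambda}q^{dk}$, we get $K_m(T_{h/d})q^{dk}=K_m(e^{ik\lambda})q^{dk}$. By \eqref{eq:Ktrig} in \cref{lem:kernel}, this equals $\bigl(\sin(mk\lambda/2)/\sin(k\lambda/2)\bigr)^2q^{dk}$. Applying it to $\Li_1(q^d)=\sum_k q^{dk}/k$ reproduces exactly the series above with a minus sign, which proves \eqref{eq:sector-kernel}.

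The only delicate point is justifying the termwise application of the shift operators. I would treat everything as formal $q$-series, or note convergence for $\operatorname{Im}t>0$ with small real $\lambda$ and $\lambda\notin 2\pi\mathbb{Q}$. The apparent singularities where $\sin(k\lambda/2)=0$ are removable, since the Fejér quotient is the Laurent polynomial $K_m(e^{ik\lambda})$. So the identity holds first on a dense set of $\lambda$ and then everywhere as an identity of formal series in $q$ whose coefficients are trigonometric polynomials. Beyond this bookkeeping, the main step is simply matching the two eigenvalues via \cref{lem:kernel}.
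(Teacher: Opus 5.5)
Your proposal is correct and follows the paper's proof essentially step for step: you compute the eigenvalues of $\Delta_g$ and $K_m(T_{h/d})$ on each mode $q^{dk}$, match them via the trigonometric form \eqref{eq:Ktrig} of \cref{lem:kernel}, and obtain \eqref{eq:sector-kernel-D2} from \eqref{eq:D2Li3} together with the fact that $D$ commutes with shifts. The extra analytic bookkeeping is unnecessary, since the identity holds coefficientwise in $q$ (the paper works in $\CC((\lambda))[[q]]$); your convergence claim for every irrational real $\lambda/2\pi$ is also slightly overstated because of small denominators, but nothing in the argument depends on it.
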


\begin{proof}
Since $d=gm$,
\[
T_{h/g}q^{dk}=e^{imk\lambda}q^{dk},
\]
so
\begin{equation}\label{eq:Delta-g-mode}
\Delta_gq^{dk}=-4\sin^2(mk\lambda/2)q^{dk}.
\end{equation}
Applying this to \eqref{eq:Phid} gives
\[
\begin{aligned}
\Delta_g\Phi_d
&=-\sum_{k\ge1}\frac{q^{dk}}k
\frac{\sin^2(mk\lambda/2)}{\sin^2(k\lambda/2)}\\
&=-\sum_{k\ge1}\frac{q^{dk}}kK_m(e^{ik\lambda}),
\end{aligned}
\]
by \cref{lem:kernel}. On the other hand,
\[
T_{h/d}q^{dk}=e^{ik\lambda}q^{dk},
\]
so
\[
K_m(T_{h/d})\Li_1(q^d)=\sum_{k\ge1}\frac{q^{dk}}kK_m(e^{ik\lambda}).
\]
This proves \eqref{eq:sector-kernel}. Equation \eqref{eq:sector-kernel-D2} follows from \eqref{eq:D2Li3} and the commutativity of $D$ with shifts.
\end{proof}

\subsection{The common difference theorem}

\begin{theorem}[Common gcd lattice equation]\label{thm:common}
Let $C$ satisfy the hypotheses of \cref{prop:decomp}, let $S=\{d:n_d\ne0\}$, and let $g=\gcd S$. Then
\begin{equation}\label{eq:common-log}
\boxed{
\Delta_g\F_C^{\loc}
=-\sum_{d\in S}n_dK_{d/g}(T_{h/d})\Li_1(q^d).}
\end{equation}
Equivalently,
\begin{equation}\label{eq:common-D2}
\boxed{
\Delta_g\F_C^{\loc}
=D^2\sum_{d\in S}\frac{n_d}{d^2}K_{d/g}(T_{h/d})\Li_3(q^d).}
\end{equation}
\end{theorem}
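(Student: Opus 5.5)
The plan is to derive \cref{thm:common} directly from \cref{prop:decomp} and the sector cancellation \cref{lem:sector}, using that $\Delta_g$ is linear.

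First, by \cref{prop:decomp} we have $\F_C^{\loc}=\sum_{d\in S}n_d\Phi_d$, and the sum is finite because $S\subset\{1,\dots,\ell\}$. Since $\Delta_g=T_{h/g}-2+T_{-h/g}$ is a linear combination of shift operators, it commutes with finite linear combinations. This gives $\Delta_g\F_C^{\loc}=\sum_{d\in S}n_d\Delta_g\Phi_d$.

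Second, every $d\in S$ is a multiple of $g=\gcd S$, so $d=gm$ with $m=d/g$ a positive integer. This integrality is exactly the hypothesis of \cref{lem:sector}, which then gives $\Delta_g\Phi_d=-K_{d/g}(T_{h/d})\Li_1(q^d)$. Substituting this into the sum yields \eqref{eq:common-log}.

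Third, for \eqref{eq:common-D2} we apply the second form \eqref{eq:sector-kernel-D2} of \cref{lem:sector} sector by sector. Alternatively, one substitutes $\Li_1(q^d)=-d^{-2}D^2\Li_3(q^d)$ from \eqref{eq:D2Li3} and moves $D^2$ past the Laurent polynomial in $T_{h/d}$. This is legitimate because $D$ commutes with every translation $T_a$: both act diagonally on the monomials $q^N$. Finally, linearity of $D^2$ pulls it outside the finite sum over $S$.

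The only point that needs care is convergence, so that these termwise manipulations are justified. The series $\Phi_d$ converges for $|q|<1$, provided $\lambda\notin 2\pi\ZZ$ so that $\sin(k\lambda/2)\neq 0$. Shifting $t$ by a real amount $\pm h/g$ does not change $|q|$, so each shifted series still converges absolutely. Hence $\Delta_g$ and $D$ may be applied termwise. Alternatively, one can treat everything as a formal identity in $q$ with coefficients that are trigonometric functions of $\lambda$; in that setting \eqref{eq:shiftmode} holds monomialwise and no analytic issue arises. I do not expect a genuine obstacle here: the substance is already contained in \cref{lem:sector}, and the theorem is its linear superposition on the common lattice $g\ZZ$.
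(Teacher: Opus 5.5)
Your proposal is correct and matches the paper's proof: expand $\F_C^{\loc}=\sum_{d\in S}n_d\Phi_d$ by \cref{prop:decomp}, apply $\Delta_g$ termwise, and invoke \cref{lem:sector} with $m=d/g\in\ZZ_{>0}$, getting the $D^2$ form from \eqref{eq:D2Li3} and the fact that $D$ commutes with shifts. One small slip in your convergence aside: $\sin(k\lambda/2)\neq0$ for all $k\ge1$ requires $\lambda\notin2\pi\mathbb{Q}$, not merely $\lambda\notin2\pi\ZZ$, so the formal reading in $\CC((\lambda))[[q]]$ that the paper uses is the cleaner justification.
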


\begin{proof}
By \cref{prop:decomp}, $\F_C^{\loc}=\sum_{d\in S}n_d\Phi_d$. Apply $\Delta_g$ and use \cref{lem:sector} sector by sector.
\end{proof}

\begin{remark}
The theorem should be distinguished from a collection of degreewise equations. Each $\Phi_d$ is naturally adapted to $\Delta_d$. Theorem~\ref{thm:common} places all nonzero degrees on the single lattice determined by $g=\gcd S$; the mismatch is measured exactly by the kernels $K_{d/g}$.
\end{remark}

\begin{remark}
For $S=\{1\}$ and $n_1=1$, one has $g=1$ and $K_1=1$, so \eqref{eq:common-D2} reduces to the resolved conifold equation \eqref{eq:alim}.
\end{remark}

\subsection{Formal interpretation}

The individual sector $\Phi_d$ is naturally an element of $\CC((\lambda))[[q]]$. After applying $\Delta_g$, the sine square poles cancel. By \eqref{eq:common-log}, the forcing term is a finite sum of expressions
\[
K_m(T_{h/d})\Li_1(q^d),
\]
which are regular at $\lambda=0$. Since $K_m(z)=K_m(z^{-1})$, the resulting Taylor expansion is even in $\lambda$. Hence
\[
\R_g(\lambda,q):=\Delta_g\F_C^{\loc}
\]
admits the specialization
\[
R_g(q):=\R_g(0,q),
\]
which is the starting point of the arithmetic reconstruction theory.

\section{Arithmetic reconstruction and cyclotomic minimality}\label{sec:arithmetic}

\subsection{The classical forcing series}

Write $d=ge$ and set $a_e:=n_{ge}$. By \eqref{eq:common-log},
\begin{equation}\label{eq:Rquantum}
\R_g(\lambda,q)=-\sum_{e\ge1}a_eK_e(T_{h/(ge)})\Li_1(q^{ge}),
\end{equation}
where only finitely many $a_e$ are nonzero in the local contractible curve setting. Since $K_e(1)=e^2$, specializing at $\lambda=0$ gives
\begin{equation}\label{eq:Rclassical}
\boxed{
R_g(q)=-\sum_{e\ge1}e^2a_e\Li_1(q^{ge}).}
\end{equation}

\begin{lemma}\label{lem:divisor}
For every $N\ge1$,
\begin{equation}\label{eq:divisor}
\boxed{
[q^{gN}]R_g(q)=-\frac1N\sum_{e\mid N}e^3a_e.}
\end{equation}
\end{lemma}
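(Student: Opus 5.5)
The plan is to expand the classical forcing \eqref{eq:Rclassical} termwise as a power series in $q$ and read off the coefficient of $q^{gN}$. We use the standard expansion $\Li_1(z)=\sum_{k\ge1}z^k/k$, valid as a formal power series. Substituting $z=q^{ge}$ gives
\[
R_g(q)=-\sum_{e\ge1}e^2a_e\sum_{k\ge1}\frac{q^{gek}}{k}.
\]
Since only finitely many $a_e$ are nonzero in the local setting, this is a finite sum of formal power series. The rearrangement is therefore harmless. Even for infinite support it would be harmless, because each power $q^{M}$ receives contributions only from the finitely many pairs $(e,k)$ with $gek=M$.

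Next we collect the terms with exponent exactly $gN$. These correspond to pairs $(e,k)$ with $ek=N$, that is, $e\mid N$ and $k=N/e$. The corresponding contribution is
\[
-\sum_{e\mid N}e^2a_e\cdot\frac{e}{N}=-\frac1N\sum_{e\mid N}e^3a_e,
\]
which is \eqref{eq:divisor}. As a side remark, coefficients of $q^M$ with $g\nmid M$ vanish; this is not needed for the statement but will be useful later for the M\"obius inversion.

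There is no real obstacle here. The only point requiring care is the bookkeeping $k=N/e$, which turns the weight $e^2/k$ into $e^3/N$. One should also note that \eqref{eq:Rclassical} already incorporates $K_e(1)=e^2$, so the $\lambda=0$ specialization has been handled before this lemma.
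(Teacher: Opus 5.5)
Your proof is correct and is essentially the paper's argument: expand $\Li_1(q^{ge})=\sum_{k\ge1}q^{gek}/k$, observe that $q^{gN}$ arises exactly when $ek=N$ (so $e\mid N$ and $k=N/e$), and convert the weight $e^2a_e/k$ into $e^3a_e/N$. Your extra remarks, that the rearrangement is harmless and that coefficients off the $g$-lattice vanish, are accurate but not needed for the statement.
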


\begin{proof}
The term $-e^2a_e\Li_1(q^{ge})$ contributes to $q^{gN}$ precisely when $N=ek$, equivalently $e\mid N$. Its coefficient is then
\[
-e^2a_e\frac1k=-\frac{e^3}{N}a_e.
\]
Summing over $e\mid N$ gives \eqref{eq:divisor}.
\end{proof}

Define
\begin{equation}\label{eq:bcdef}
c_N=[q^{gN}]R_g(q),\qquad b_N=-Nc_N.
\end{equation}
Then
\begin{equation}\label{eq:bdivisor}
\boxed{b_N=\sum_{e\mid N}e^3a_e.}
\end{equation}

\subsection{M\"obius reconstruction}

\begin{theorem}[Arithmetic reconstruction]\label{thm:reconstruction}
Fix the exceptional ray and its lattice normalization $g$. Then the classical forcing series $R_g(q)$ uniquely determines the local genus zero GV spectrum $\{n_{gN}\}_{N\ge1}$. More precisely,
\begin{equation}\label{eq:moebius-b}
\boxed{
N^3a_N=\sum_{r\mid N}\mu(r)b_{N/r},}
\end{equation}
and hence
\begin{equation}\label{eq:moebius-main}
\boxed{
n_{gN}=a_N=-\frac1{N^2}\sum_{r\mid N}\frac{\mu(r)}r
[q^{gN/r}]R_g(q).}
\end{equation}
\end{theorem}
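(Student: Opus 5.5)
The plan is to apply classical Möbius inversion to the divisor convolution \eqref{eq:bdivisor} furnished by \cref{lem:divisor}. Put $f(e):=e^3a_e$. Then \eqref{eq:bdivisor} says $b=f*\mathbf 1$, a Dirichlet convolution with the constant function $1$. Every $b_N$ is read off directly from $R_g(q)$ through \eqref{eq:bcdef}, so $b$ is determined by the forcing series.

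The inversion step is the standard one. Convolving with $\mu$ and using $\mu*\mathbf 1=\varepsilon$ (the identity for Dirichlet convolution), we get $f=\mu*b$, that is,
\[
N^3a_N=\sum_{r\mid N}\mu(r)\,b_{N/r},
\]
which is \eqref{eq:moebius-b}. If one wants to avoid quoting the inversion formula, there is a direct check. Substitute \eqref{eq:bdivisor} into the right-hand side, interchange the sums to get $\sum_{e\mid N}e^3a_e\sum_{r\mid N/e}\mu(r)$, and note that the inner sum vanishes unless $e=N$.

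To obtain \eqref{eq:moebius-main}, substitute $b_{N/r}=-(N/r)\,c_{N/r}=-(N/r)[q^{gN/r}]R_g(q)$. This gives
\[
N^3a_N=-N\sum_{r\mid N}\frac{\mu(r)}{r}[q^{gN/r}]R_g(q).
\]
Dividing by $N^3$ and recalling $a_N=n_{gN}$ yields the boxed formula. Since the right-hand side involves only coefficients of $R_g$, uniqueness follows. Two such spectra with the same $R_g$ have the same $b_N$ for every $N$, hence the same $a_N$, by injectivity of convolution with $\mathbf 1$. There is no real obstacle here. The only points needing care are bookkeeping ones. First, the coefficient extraction must be on the lattice $q^{gN}$; this is guaranteed by \eqref{eq:Rclassical}, since every term is a series in $q^{g}$. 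Second, finiteness of the support is not needed, because the inversion works degree by degree.
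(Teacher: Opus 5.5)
Your proposal is correct and matches the paper's proof: you M\"obius-invert the divisor sum $b_N=\sum_{e\mid N}e^3a_e$ to get $N^3a_N=\sum_{r\mid N}\mu(r)b_{N/r}$, then substitute $b_{N/r}=-(N/r)[q^{gN/r}]R_g(q)$ and divide by $N^3$. Your direct interchange-of-sums check, and your remarks on the $q^g$-lattice and on finite support being unnecessary, are accurate supplements but do not change the argument.
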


\begin{proof}
Equation \eqref{eq:bdivisor} has the form
\[
b_N=\sum_{e\mid N}f(e),\qquad f(e)=e^3a_e.
\]
M\"obius inversion gives
\[
f(N)=\sum_{r\mid N}\mu(r)b_{N/r},
\]
which is \eqref{eq:moebius-b}. Since
\[
b_{N/r}=-\frac Nr[q^{gN/r}]R_g(q),
\]
dividing by $N^3$ yields \eqref{eq:moebius-main}.
\end{proof}

\begin{corollary}\label{cor:R-spectrum}
For fixed $g$, the data $R_g(q)$ and $\{n_{gN}\}_{N\ge1}$ mutually determine one another.
\end{corollary}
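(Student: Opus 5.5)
One direction is already contained in \cref{thm:reconstruction}: by \eqref{eq:moebius-main}, the coefficients of $R_g(q)$ along the exponents $gN$ determine every $n_{gN}$. What remains is the converse, namely that $\{n_{gN}\}_{N\ge1}$ determines $R_g(q)$. I will also note that, for fixed $g$, $R_g$ encodes nothing beyond its coefficients at exponents divisible by $g$, so the two directions really do describe a bijection.

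For the converse I start from \eqref{eq:Rclassical} with $a_e=n_{ge}$. This writes $R_g(q)=-\sum_{e\ge1}e^2a_e\Li_1(q^{ge})$ directly in terms of the spectrum, so $R_g$ is determined by the spectrum. To make the correspondence precise at the level of coefficients, I use \cref{lem:divisor}. It shows that $[q^{gN}]R_g=-\frac1N\sum_{e\mid N}e^3a_e$, and that every coefficient $[q^M]R_g$ with $g\nmid M$ vanishes, since each summand $\Li_1(q^{ge})$ involves only powers of $q^g$. Hence $R_g$ is exactly the generating series of the $c_N$ defined in \eqref{eq:bcdef}. The maps $\{a_N\}\mapsto\{b_N\}$ and $\{b_N\}\mapsto\{a_N\}$, given by \eqref{eq:bdivisor} and \eqref{eq:moebius-b}, are mutually inverse because Dirichlet convolution with the constant function $1$ and with $\mu$ are inverse operations. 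The passages $b_N\leftrightarrow c_N=-b_N/N$ and $N^3a_N\leftrightarrow a_N$ are invertible rescalings. Composing these bijections proves the corollary.

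I expect no substantive obstacle. The one point needing care is the implicit convention that $g$ is fixed in advance, so that reading $R_g$ presupposes the lattice normalization. Once $g$ is fixed, the degrees $d$ not divisible by $g$ carry $n_d=0$ by the definition $g=\gcd S$, so recovering $\{n_{gN}\}$ recovers the entire support $S$ and nothing is lost.
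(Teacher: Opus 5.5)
Your proposal is correct and follows the paper's route. The forward direction is the M\"obius inversion of \cref{thm:reconstruction}, and the converse is read off from the explicit expansion \eqref{eq:Rclassical}, exactly as the paper does in the proof of \cref{thm:information}; your added observation that coefficients at exponents not divisible by $g$ vanish just makes the bijection explicit.
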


\begin{remark}\label{rem:g-normalization}
The normalization $g$ is part of the data. The theorem does not assert that an abstract formal series canonically determines the primitive lattice on which it should be interpreted. Once the exceptional ray and its lattice normalization are fixed, however, the forcing coefficients determine all local GV multiplicities on that ray.
\end{remark}

\subsection{Reconstruction from the full quantum forcing}

The full forcing has a divisor triangular coefficient structure.

\begin{proposition}\label{prop:quantum-triangle}
For every $N\ge1$,
\begin{equation}\label{eq:quantum-coeff}
\boxed{
[q^{gN}]\R_g(\lambda,q)
=-\sum_{e\mid N}a_e\frac eN
K_e\!\left(e^{i(N/e)\lambda}\right).}
\end{equation}
\end{proposition}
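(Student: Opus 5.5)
The plan is to read off the coefficient of $q^{gN}$ directly from the expansion \eqref{eq:Rquantum}, treating each sector separately and then summing. This mirrors the proof of \cref{lem:divisor}, except that the kernel is no longer specialized at $\lambda=0$ and must be evaluated on each mode.

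\textbf{Step 1: expand each sector.} Start from
\[
\R_g(\lambda,q)=-\sum_{e\ge1}a_eK_e(T_{h/(ge)})\Li_1(q^{ge}),
\]
and expand
\[
\Li_1(q^{ge})=\sum_{k\ge1}\frac{q^{gek}}{k}.
\]

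\textbf{Step 2: act with the kernel mode by mode.} By \eqref{eq:shiftmode} with $a=h/(ge)$ and $h=\lambda/(2\pi)$,
\[
T_{h/(ge)}q^{gek}=e^{2\pi i\,gek\,h/(ge)}q^{gek}=e^{ik\lambda}q^{gek}.
\]
Each monomial is therefore an eigenvector of every power of $T_{h/(ge)}$. Since $K_e$ is a Laurent polynomial, this gives
\[
K_e(T_{h/(ge)})q^{gek}=K_e(e^{ik\lambda})q^{gek}.
\]
This is the same computation already used in the proof of \cref{lem:sector}.

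\textbf{Step 3: extract the coefficient.} The $e$-th sector contributes to $q^{gN}$ exactly when $gek=gN$, that is, when $e\mid N$ and $k=N/e$. In that case the contribution is
\[
-a_e\,\frac{1}{k}\,K_e(e^{ik\lambda})=-a_e\,\frac eN\,K_e\!\left(e^{i(N/e)\lambda}\right).
\]
Only finitely many $a_e$ are nonzero, so summing over $e\mid N$ is harmless and yields \eqref{eq:quantum-coeff}.

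There is no real obstacle here. The only point needing care is notational: the index $e$ must be kept distinct from the exponential base, and the shift step $h/(ge)$ must produce the phase $e^{ik\lambda}$ rather than $e^{iek\lambda}$. As a consistency check, setting $\lambda=0$ gives $K_e(1)=e^2$, and the formula then reduces to \cref{lem:divisor}.
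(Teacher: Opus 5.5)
Your proposal is correct and follows essentially the same route as the paper. You isolate the $e$-sector of \eqref{eq:Rquantum}, note that it reaches $q^{gN}$ only for $k=N/e$ with coefficient $1/k=e/N$, and evaluate $K_e(T_{h/(ge)})$ on that mode as $K_e(e^{i(N/e)\lambda})$; your $\lambda=0$ check against \cref{lem:divisor} is a sound extra sanity test.
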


\begin{proof}
The $e$-sector contributes to $q^{gN}$ only when $N=ek$. The coefficient of $q^{gek}$ in $\Li_1(q^{ge})$ is $1/k=e/N$. On this mode $T_{h/(ge)}$ acts by $e^{ik\lambda}=e^{i(N/e)\lambda}$, giving \eqref{eq:quantum-coeff}.
\end{proof}

The diagonal divisor $e=N$ contributes $-a_NK_N(e^{i\lambda})$. Since
\begin{equation}\label{eq:Kunit}
K_N(e^{i\lambda})=N^2+O(\lambda^2),
\end{equation}
it is a unit in $\CC[[\lambda]]$.

\begin{corollary}[Quantum triangular reconstruction]\label{cor:quantum-reconstruction}
For every $N\ge1$,
\begin{equation}\label{eq:quantum-recursion}
\boxed{
a_N=-\frac{
[q^{gN}]\R_g+
\displaystyle\sum_{\substack{e\mid N\\ e<N}}
a_e\frac eN K_e(e^{i(N/e)\lambda})
}{K_N(e^{i\lambda})}.}
\end{equation}
Consequently the full forcing term $\R_g(\lambda,q)$ determines the entire local genus zero GV spectrum.
\end{corollary}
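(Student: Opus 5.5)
The plan is strong induction on $N$ along the divisor poset, applied to the coefficient formula of \cref{prop:quantum-triangle}. Fix $N\ge1$ and isolate the diagonal divisor $e=N$ in \eqref{eq:quantum-coeff}. For this term $e/N=1$ and the kernel argument is $e^{i\lambda}$, so it contributes exactly $-a_NK_N(e^{i\lambda})$. Moving the remaining divisors $e\mid N$, $e<N$, to the other side gives
\[
a_NK_N(e^{i\lambda})=-[q^{gN}]\R_g-\sum_{\substack{e\mid N\\ e<N}}a_e\frac eN K_e\bigl(e^{i(N/e)\lambda}\bigr),
\]
an identity in $\CC[[\lambda]]$. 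Each $K_e(e^{i(N/e)\lambda})$ is a finite trigonometric polynomial, so this is even an identity of entire functions of $\lambda$.

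Next I divide by $K_N(e^{i\lambda})$ to obtain \eqref{eq:quantum-recursion}. By \eqref{eq:Kunit}, or directly from \cref{lem:kernel}, we have $K_N(1)=N^2\neq0$, so $K_N(e^{i\lambda})$ has nonzero constant term and is invertible in $\CC[[\lambda]]$. The right-hand side therefore makes sense as a formal power series. Since $a_N$ is a constant, the quotient is in fact a constant series. Alternatively, evaluating at $\lambda=0$ already recovers $a_N$. The potential obstacle is making sure the division is legitimate. I would settle this by the unit argument rather than by pointwise evaluation, because $K_N(e^{i\lambda})$ vanishes at $\lambda=2\pi j/N$ for $N\nmid j$.

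For the final assertion, induct on $N$. For $N=1$ the sum over proper divisors is empty, and $a_1=-[q^g]\R_g/K_1=-[q^g]\R_g$. If $a_e$ is known for all $e<N$, then every quantity on the right of \eqref{eq:quantum-recursion} is determined by $\R_g$ and these earlier values, since all proper divisors of $N$ are smaller than $N$. Hence $a_N=n_{gN}$ is determined. This gives the whole spectrum $\{n_{gN}\}$, which is the complete local GV data on the ray because $S\subset g\ZZ$.

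As a consistency check, setting $\lambda=0$ in the recursion gives $N^3a_N=b_N-\sum_{e\mid N,e<N}e^3a_e$. This is the divisor relation \eqref{eq:bdivisor}, so the result agrees with \cref{thm:reconstruction}.
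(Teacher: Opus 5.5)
Your proposal is correct and follows essentially the paper's own argument: isolate the diagonal divisor $e=N$ in \eqref{eq:quantum-coeff}, divide by the unit $K_N(e^{i\lambda})\in\CC[[\lambda]]$ (justified by \eqref{eq:Kunit}), and induct on $N$ through its proper divisors. Your added remarks are accurate refinements of that argument: the division has to be read formally because $K_N(e^{i\lambda})$ vanishes at $\lambda=2\pi j/N$ with $N\nmid j$, and specializing at $\lambda=0$ recovers \eqref{eq:bdivisor}.
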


\begin{proof}
Separate $e=N$ in \eqref{eq:quantum-coeff} and solve for $a_N$. Induction on $N$ completes the argument.
\end{proof}

\subsection{Information equivalence}

\begin{theorem}[Information equivalence]\label{thm:information}
Fix the exceptional ray and its lattice normalization $g$. Then the following three data sets mutually determine one another:
\begin{equation}\label{eq:information-equivalence}
\boxed{
\R_g(\lambda,q)
\Longleftrightarrow
R_g(q)
\Longleftrightarrow
\{n_{gN}\}_{N\ge1}.}
\end{equation}
\end{theorem}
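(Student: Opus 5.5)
The plan is to assemble the equivalence from the three directions already established, arranged as a cycle of implications. The cycle is
\[
\{n_{gN}\}\Longrightarrow\R_g(\lambda,q)\Longrightarrow R_g(q)\Longrightarrow\{n_{gN}\}.
\]
Each arrow is a determination by an explicit formula, so the three data sets are pairwise mutually determined.

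\emph{First arrow.} The spectrum determines the full forcing directly. With $g$ fixed and $a_e=n_{ge}$, formula \eqref{eq:Rquantum}, a consequence of \cref{thm:common}, expresses $\R_g(\lambda,q)$ as an explicit finite combination of the series $K_e(T_{h/(ge)})\Li_1(q^{ge})$. Alternatively, \cref{prop:quantum-triangle} gives every coefficient $[q^{gN}]\R_g$ as a polynomial in the $a_e$ with coefficients in $\CC[[\lambda]]$. Hence $\R_g$ is a function of $\{a_e\}$.

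\emph{Second arrow.} The full forcing determines the classical forcing by specialization. \S2.5 shows that the forcing is regular at $\lambda=0$. Coefficientwise, each $[q^{gN}]\R_g$ is a Laurent polynomial in $e^{i\lambda}$, so it lies in $\CC[[\lambda]]$, and $R_g(q)$ is obtained by setting $\lambda=0$ in each coefficient. As a consistency check, $K_e(1)=e^2$ recovers \eqref{eq:Rclassical}.

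\emph{Third arrow.} The classical forcing determines the spectrum by the M\"obius formula \eqref{eq:moebius-main} of \cref{thm:reconstruction}. This is exactly \cref{cor:R-spectrum} in the direction we need.

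\cref{cor:quantum-reconstruction} independently gives $\R_g\Rightarrow\{n_{gN}\}$ without passing through $\lambda=0$, which is a useful redundancy. Combining the three arrows, each pair of data sets determines the other.

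The only point needing care is a bookkeeping one: ``determine'' must be read with the exceptional ray and $g$ fixed, as in \cref{rem:g-normalization}. The reason is that the coefficient extraction $[q^{gN}]$ and the shift $T_{h/(ge)}$ both depend on $g$. I would state this convention explicitly at the start of the proof. Beyond that I expect no real obstacle, since the substantive work is contained in \cref{thm:reconstruction} and \cref{prop:quantum-triangle}.
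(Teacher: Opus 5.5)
Your proposal is correct and matches the paper's proof, which closes the same cycle: $R_g$ is obtained from $\R_g$ by specialization at $\lambda=0$, the spectrum from $R_g$ by the M\"obius formula of \cref{thm:reconstruction}, and $\R_g$ from the spectrum via \eqref{eq:Rquantum}. Your extra remarks are refinements, not a different argument: the coefficientwise justification of regularity at $\lambda=0$, and the independent route $\R_g\Rightarrow\{n_{gN}\}$ through \cref{cor:quantum-reconstruction}.
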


\begin{proof}
The map $\R_g\mapsto R_g$ is specialization at $\lambda=0$. The map $R_g\mapsto\{n_{gN}\}$ is \cref{thm:reconstruction}. Conversely the spectrum determines the full forcing by
\begin{equation}\label{eq:forcing-from-spectrum}
\R_g(\lambda,q)
=-\sum_{N\ge1}n_{gN}K_N(T_{h/(gN)})\Li_1(q^{gN}),
\end{equation}
which is \eqref{eq:Rquantum} with $e=N$.
\end{proof}

\begin{corollary}\label{cor:equality}
Let $C$ and $C'$ be two local flopping geometries whose exceptional rays are identified with the same lattice normalization $g$. Then $\R_g=\R'_g$ if and only if $R_g=R'_g$, if and only if
\[
n_{gN}=n'_{gN}\qquad\text{for every }N\ge1.
\]
\end{corollary}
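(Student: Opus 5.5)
The plan is to reduce \cref{cor:equality} to \cref{thm:information}, applied separately to $C$ and to $C'$, and then compare the outputs. The common normalization $g$ is what makes the comparison meaningful. By \cref{rem:g-normalization}, the reconstruction maps only make sense once $g$ is fixed. Because both geometries are read on the same lattice, the formulas \eqref{eq:forcing-from-spectrum}, the specialization $\lambda=0$, and \eqref{eq:moebius-main} are literally the same maps for $C$ and $C'$.

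The first step is the implication $\R_g=\R'_g\Rightarrow R_g=R'_g$, which holds because $R_g$ is the image of $\R_g$ under the specialization map $\lambda\mapsto0$. This map is well defined on the forcing terms: by the formal interpretation in \cref{sec:local}, each forcing term lies in $\CC[[\lambda]][[q]]$, since \eqref{eq:Rquantum} is a finite sum of expressions regular at $\lambda=0$.

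The second step is the implication $R_g=R'_g\Rightarrow n_{gN}=n'_{gN}$ for all $N\ge1$. Here I apply \eqref{eq:moebius-main} of \cref{thm:reconstruction}. It writes $n_{gN}$ as a fixed linear combination of the coefficients $[q^{gN/r}]R_g$, and the coefficients in that combination depend only on $N$. Equal series therefore give equal multiplicities.

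The third step closes the cycle with $n_{gN}=n'_{gN}$ for all $N\Rightarrow\R_g=\R'_g$, using \eqref{eq:forcing-from-spectrum}. The only point to check is that both sides of that formula involve the same operators $K_N(T_{h/(gN)})$ and the same functions $\Li_1(q^{gN})$. This is true because $h=\lambda/2\pi$ and $g$ are shared by the two geometries.

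I expect no step to be a genuine obstacle. The only subtlety is bookkeeping: the supports $S$ and $S'$ may differ, with $\gcd S\neq g$ in principle. One should therefore index everything by all $N\ge1$ on the fixed lattice $g\ZZ$, allowing zero multiplicities, rather than by the sets $S$ and $S'$. This is exactly how \eqref{eq:Rquantum} is written, so the three implications go through verbatim.
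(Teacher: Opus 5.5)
Your proposal is correct and is essentially the paper's argument. The corollary is an immediate consequence of Theorem~\ref{thm:information}, whose proof consists of exactly your three maps: specialization at $\lambda=0$, the M\"obius formula \eqref{eq:moebius-main}, and the reconstruction \eqref{eq:forcing-from-spectrum}. Your cyclic chain of implications simply applies these maps to $C$ and $C'$ on the common lattice $g\ZZ$, and your bookkeeping remark is sound provided $g$ divides every degree in $S\cup S'$, which is what the shared lattice normalization is meant to guarantee.
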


\begin{remark}
Theorem~\ref{thm:information} does not say that the $\lambda$-dependent forcing has no additional structure. It records the complete shift profile $K_N(e^{ik\lambda})$ associated with each multiple cover sector. The point is more specific: for recovering the local genus zero GV multiplicities, this quantum shift structure is redundant. Thus
\[
\lambda\to0\quad\text{forgets the shift profile but not the GV spectrum.}
\]
\end{remark}

\subsection{Contraction algebra widths from the classical forcing}\label{subsec:contraction}

We now record a geometric consequence of the reconstruction package. Consider a one-curve flopping contraction in the primitive curve class normalization, so that the local spectrum is $\{n_d\}_{1\le d\le\ell}$ and $g=1$. Define
\begin{equation}\label{eq:Eforcing}
\mathscr E_C(q):=\exp(R_1(q)).
\end{equation}
Since $\Li_1(z)=-\log(1-z)$, equation \eqref{eq:Rclassical} gives
\begin{equation}\label{eq:Eforcing-product}
\boxed{
\mathscr E_C(q)=\prod_{d=1}^{\ell}(1-q^d)^{d^2n_d}.}
\end{equation}
Only finitely many factors are nontrivial.

Recall that the contraction algebra $A_{\mathrm{con}}$ of a flopping curve is the finite dimensional algebra representing its universal noncommutative deformation theory \cite{DonovanWemyss2016}. Toda proved that its noncommutative and commutative widths are determined by the genus zero GV invariants by
\begin{equation}\label{eq:Toda-widths}
\dim_{\CC}A_{\mathrm{con}}=\sum_{d=1}^{\ell}d^2n_d,
\qquad
\dim_{\CC}A_{\mathrm{con}}^{\mathrm{ab}}=n_1
\end{equation}
\cite{Toda2015}.

\begin{proposition}[Contraction algebra bridge]\label{prop:contraction-bridge}
The classical difference forcing determines both widths of the contraction algebra. More precisely,
\begin{equation}\label{eq:width-from-forcing}
\boxed{
\ord_{q=1}\mathscr E_C(q)=\dim_{\CC}A_{\mathrm{con}},
\qquad
-[q]R_1(q)=\dim_{\CC}A_{\mathrm{con}}^{\mathrm{ab}}.}
\end{equation}
\end{proposition}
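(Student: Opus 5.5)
The plan is to combine the explicit product formula \eqref{eq:Eforcing-product} with Toda's width formulas \eqref{eq:Toda-widths}. Both identities in \eqref{eq:width-from-forcing} then reduce to elementary computations on a finite product and on a single coefficient of $R_1$.

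For the first identity, we start from $\mathscr E_C(q)=\prod_{d=1}^{\ell}(1-q^d)^{d^2n_d}$. This is a finite product of Laurent-type factors; since $n_d\ge0$ are integers, it is in fact a polynomial. Each factor satisfies
\[
1-q^d=(1-q)(1+q+\cdots+q^{d-1}),
\]
and the second factor takes the value $d\neq0$ at $q=1$. Hence $\ord_{q=1}(1-q^d)=1$ for every $d\ge1$. Since order at a point is additive over products, this gives
\[
\ord_{q=1}\mathscr E_C(q)=\sum_{d=1}^{\ell}d^2n_d,
\]
and this equals $\dim_{\CC}A_{\mathrm{con}}$ by \eqref{eq:Toda-widths}. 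To stay honest about the formal-series setting, I would note that $R_1(q)$ is a formal power series with zero constant term, so $\exp(R_1)$ is a well-defined formal series. Because only finitely many $n_d$ are nonzero, the identity $\exp(-\Li_1(z))=1-z$ shows that this series coincides with the polynomial above. Its order at $q=1$ is therefore meaningful.

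For the second identity, I would apply \cref{lem:divisor} with $g=1$ and $N=1$. This gives $[q]R_1(q)=-\sum_{e\mid 1}e^3a_e=-n_1$. Alternatively one can read it off directly from \eqref{eq:Rclassical}: only the $d=1$ sector contributes a linear term, namely $-n_1\Li_1(q)=-n_1q+O(q^2)$. Toda's formula $\dim_{\CC}A_{\mathrm{con}}^{\mathrm{ab}}=n_1$ then finishes the argument.

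There is no genuine obstacle in this argument. The only step needing care is the first, where we must confirm that $\mathscr E_C$ is an honest polynomial, so that the order at $q=1$ is defined, rather than a purely formal exponential. This is settled by the finiteness of the support in the contractible setting, $1\le d\le\ell\le6$, together with the nonnegativity of the BKL multiplicities $n_d=k_d$, which makes every exponent $d^2n_d$ a nonnegative integer. The remaining ingredient is the identification of the GV spectrum in Toda's theorem with the BKL multiplicities $n_d$ used in \cref{prop:decomp}, and this is the normalization already fixed in the paper.
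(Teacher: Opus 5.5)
Your proposal is correct and follows the paper's proof: each factor $1-q^d$ has a simple zero at $q=1$, so $\ord_{q=1}\mathscr E_C(q)=\sum_d d^2n_d$; only the $d=1$ sector contributes to $[q]R_1(q)$, giving $-[q]R_1(q)=n_1$; and Toda's width formulas close both identities. Your remarks that $\exp(R_1)$ is an honest polynomial, so that its order at $q=1$ makes sense, are extra care that the paper leaves implicit.
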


\begin{proof}
For each $d\ge1$, the factor $1-q^d$ has a simple zero at $q=1$. Hence \eqref{eq:Eforcing-product} gives
\[
\ord_{q=1}\mathscr E_C(q)=\sum_{d=1}^{\ell}d^2n_d,
\]
which is the first identity by \eqref{eq:Toda-widths}. For the second, only the $d=1$ sector contributes to the coefficient of $q$ in
\[
R_1(q)=-\sum_{d=1}^{\ell}d^2n_d\Li_1(q^d),
\]
so $-[q]R_1(q)=n_1$. Apply \eqref{eq:Toda-widths} again.
\end{proof}

\begin{corollary}[A forcing criterion for commutativity]\label{cor:commutative-forcing}
The contraction algebra is commutative if and only if
\begin{equation}\label{eq:commutative-criterion}
\boxed{
\ord_{q=1}\mathscr E_C(q)=-[q]R_1(q).}
\end{equation}
\end{corollary}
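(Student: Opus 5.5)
By \cref{prop:contraction-bridge}, the two sides of \eqref{eq:commutative-criterion} are $\dim_{\CC}A_{\mathrm{con}}$ and $\dim_{\CC}A_{\mathrm{con}}^{\mathrm{ab}}$. So the corollary reduces to a purely algebraic statement: a finite dimensional algebra is commutative if and only if its abelianization has the same dimension. I will prove that statement directly.

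Recall that $A_{\mathrm{con}}^{\mathrm{ab}}=A_{\mathrm{con}}/I$, where $I$ is the two-sided ideal generated by all commutators $xy-yx$. Then
\[
\dim_{\CC}A_{\mathrm{con}}^{\mathrm{ab}}=\dim_{\CC}A_{\mathrm{con}}-\dim_{\CC}I .
\]
If $A_{\mathrm{con}}$ is commutative, every commutator vanishes, so $I=0$ and the two dimensions agree. Conversely, if the dimensions agree, then $\dim_{\CC}I=0$ because $A_{\mathrm{con}}$ is finite dimensional. Hence $I=0$, every commutator vanishes, and $A_{\mathrm{con}}$ is commutative.

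A cross-check via Toda's formula \eqref{eq:Toda-widths}: the equality becomes $\sum_{d}d^2n_d=n_1$, that is, $\sum_{d\ge2}d^2n_d=0$. Since the genus zero GV invariants of a flopping curve are nonnegative (they are BKL Hilbert scheme multiplicities), this is equivalent to $n_d=0$ for all $d\ge2$, which is the familiar characterization of commutative contraction algebras. The algebraic argument above does not need this positivity input.

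There is no real obstacle. The only points to state explicitly are the finite dimensionality of $A_{\mathrm{con}}$, which \cite{DonovanWemyss2016} guarantees, and the identification of $A^{\mathrm{ab}}_{\mathrm{con}}$ with the quotient by the commutator ideal.
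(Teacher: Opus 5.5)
Your proposal is correct and follows essentially the same route as the paper: Proposition~\ref{prop:contraction-bridge} identifies the two sides with $\dim_{\CC}A_{\mathrm{con}}$ and $\dim_{\CC}A_{\mathrm{con}}^{\mathrm{ab}}$. The paper then concludes, as you do, that equality of these dimensions is equivalent to the vanishing of the commutator ideal, i.e.\ the kernel of the surjection $A_{\mathrm{con}}\to A_{\mathrm{con}}^{\mathrm{ab}}$. Your positivity cross-check through Toda's formula is a harmless extra and is not needed for the argument.
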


\begin{proof}
The natural map $A_{\mathrm{con}}\to A_{\mathrm{con}}^{\mathrm{ab}}$ is surjective, and equality of the two dimensions is equivalent to vanishing of the commutator ideal. The claim follows from Proposition~\ref{prop:contraction-bridge}.
\end{proof}

Thus the classical forcing is not only equivalent to the local GV spectrum; two standard numerical invariants of noncommutative flop geometry can be read directly from its behavior at $q=0$ and $q=1$.

\subsection{The fine common shift lattice}

Assume from now on that $S\subset\ZZ_{>0}$ is finite and nonempty. Set
\begin{equation}\label{eq:Ldef}
L=\lcm(S),\qquad Z=T_{h/L}.
\end{equation}
For $d\in S$ define $m_d=L/d$, so $T_{h/d}=Z^{m_d}$. On $q^{dk}$, $Z$ has eigenvalue
\[
x=e^{idk\lambda/L}=e^{ik\lambda/m_d},
\]
so $x^{m_d}=e^{ik\lambda}$. Hence
\begin{equation}\label{eq:denom-x}
\boxed{
(2\sin(k\lambda/2))^2=-\frac{(x^{m_d}-1)^2}{x^{m_d}}.}
\end{equation}
The Laurent monomial $-x^{-m_d}$ is a unit; the essential denominator clearing factor is therefore $(x^{m_d}-1)^2$.

\begin{definition}\label{def:denominator-clearing}
A Laurent polynomial $P(Z)\in\CC[Z^{\pm1}]$ is called a \emph{denominator clearing operator for $S$} if
\begin{equation}\label{eq:denom-def}
\boxed{
(Z^{L/d}-1)^2\mid P(Z)\qquad\text{for every }d\in S.}
\end{equation}
Two such operators differing by a Laurent unit $cZ^r$, $c\in\CC^\times$, $r\in\ZZ$, are regarded as equivalent. Minimality is with respect to divisibility in $\CC[Z^{\pm1}]$ modulo these units.
\end{definition}

\begin{lemma}\label{lem:denom-equiv}
The divisibility condition in Definition~\ref{def:denominator-clearing} is precisely the polynomial condition induced by cancellation of the sine square multiple cover denominator on every sector mode of degree $d\in S$.
\end{lemma}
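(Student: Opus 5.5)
We will make precise what ``cancellation of the sine square denominator on every sector mode'' means, and then show that it is equivalent to the divisibility \eqref{eq:denom-def}. Fix $d\in S$ and put $m_d=L/d$. An operator $P(Z)$ acting on the sector $\Phi_d$ acts on the mode $q^{dk}$ by the scalar $P(x_k)$, where $x_k=e^{ik\lambda/m_d}$. By \eqref{eq:denom-x} we have
\[
P(Z)\Phi_d=-\sum_{k\ge1}\frac{q^{dk}}{k}\,\frac{x_k^{m_d}P(x_k)}{(x_k^{m_d}-1)^2}.
\]
We interpret ``cancellation'' as follows. The rational function
\[
\frac{x^{m_d}P(x)}{(x^{m_d}-1)^2}
\]
should be a Laurent polynomial in $x$. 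Equivalently, $P(Z)$ applied to the sector should equal $Q(Z)\Li_1(q^d)$ for a Laurent polynomial $Q$, uniformly in $k$, so that the output is a finite shift combination of $\Li_1(q^d)$, exactly as in \cref{lem:sector}. Since $x^{m_d}$ is a unit in $\CC[x^{\pm1}]$, this condition is precisely $(x^{m_d}-1)^2\mid P(x)$ in $\CC[x^{\pm1}]$.

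The main step is to check that this polynomial condition agrees with the a priori weaker ``modewise'' requirement that each coefficient be regular at $\lambda=0$, or more generally at every $\lambda$. The poles come from the points where $x^{m_d}=1$, that is, from $x$ ranging over all $m_d$-th roots of unity $\zeta$. The factorization $(x^{m_d}-1)^2=\prod_{\zeta^{m_d}=1}(x-\zeta)^2$ has distinct simple roots, each squared. Cancellation at every $k$ and every $\lambda$ means the following: as $\lambda$ varies, $x_k$ sweeps the unit circle and passes through each $\zeta$. At such a point the double pole must cancel, so $P$ must vanish to order $2$ at $\zeta$. By coprimality of distinct linear factors, this is equivalent to $\prod(x-\zeta)^2\mid P$.

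The obstacle is this interpretive point. If one only demanded regularity at $\lambda=0$, i.e.\ at $\zeta=1$, one would get $(x-1)^2\mid P$, which is too weak. So I will state explicitly that cancellation is required as an identity of functions of $\lambda$ (or in $\CC(x)$). Once that is fixed, the roots-of-unity argument closes the equivalence for each $d$. Intersecting over $d\in S$ then gives the condition of Definition~\ref{def:denominator-clearing}.

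Finally, I will note that Laurent units do not affect any of this. They multiply $Q$ by $cZ^r$, so the notion is well defined on the equivalence classes of Definition~\ref{def:denominator-clearing}.
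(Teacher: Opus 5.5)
Your proposal is correct and follows the paper's argument. Via \eqref{eq:denom-x}, the sine-square denominator on the mode $q^{dk}$ is $(x^{m_d}-1)^2$ up to the unit $-x^{-m_d}$; cancellation by the eigenvalue $P(x)$ is read as divisibility by this polynomial, and one then passes from $x$ to $Z$ and imposes the condition for every $d\in S$. Your extra roots-of-unity argument, and your observation that regularity at $\lambda=0$ alone (the only pole visible in $\CC((\lambda))[[q]]$) would force only $(x-1)^2\mid P(x)$, are also correct: they make explicit an interpretive point that the paper's two-line proof leaves implicit by treating cancellation directly as divisibility in the Laurent polynomial ring.
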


\begin{proof}
Fix $d\in S$ and a mode $q^{dk}$. By \eqref{eq:denom-x}, the sine square denominator is, up to the invertible monomial $-x^{-L/d}$, the square $(x^{L/d}-1)^2$. Thus, mode by mode, cancellation by the eigenvalue of $P(Z)$ requires divisibility by this polynomial. Passing from the eigenvalue variable $x$ to the indeterminate $Z$ yields exactly the algebraic condition \eqref{eq:denom-def}.
\end{proof}

\subsection{Cyclotomic minimality}

\begin{theorem}[Fine lattice cyclotomic minimality]\label{thm:cyclotomic}
Let $S\subset\ZZ_{>0}$ be finite and nonempty and let $L=\lcm(S)$. Up to multiplication by a Laurent unit, the unique minimal denominator clearing operator is
\begin{equation}\label{eq:Pmin}
\boxed{
P_S^{\min}(Z)=\lcm_{d\in S}(Z^{L/d}-1)^2.}
\end{equation}
Define
\begin{equation}\label{eq:Cset}
\mathscr C(S)=\bigcup_{d\in S}\left\{r\ge1:r\mid\frac Ld\right\}.
\end{equation}
Then, up to a Laurent unit,
\begin{equation}\label{eq:Pcyclo}
\boxed{
P_S^{\min}(Z)=\prod_{r\in\mathscr C(S)}\Phi_r(Z)^2.}
\end{equation}
\end{theorem}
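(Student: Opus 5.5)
The plan is to work in the Laurent polynomial ring $\CC[Z^{\pm1}]$. It is a principal ideal domain: it is the localization of $\CC[Z]$ at $Z$, and its units are exactly the monomials $cZ^r$. Definition~\ref{def:denominator-clearing} says that $P$ is denominator clearing if and only if $P$ lies in the intersection of the principal ideals $\bigl((Z^{L/d}-1)^2\bigr)$ over $d\in S$. Since $S$ is finite and nonempty, this intersection is a nonzero principal ideal generated by the least common multiple of the generators. I will define that lcm by taking the maximal exponent of each irreducible factor, using unique factorization in the PID. Any generator of the ideal divides every denominator clearing operator, and it is itself denominator clearing. Hence it is the unique minimal one up to associates, and associates are precisely the Laurent unit multiples. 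This gives \eqref{eq:Pmin}, including uniqueness. I will take care that ``minimal with respect to divisibility'' means least (divides all others), not merely minimal, and the ideal description supplies exactly that.

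For the cyclotomic form, I will use the factorization $Z^m-1=\prod_{r\mid m}\Phi_r(Z)$ in $\CC[Z]$, which gives
\[
(Z^{L/d}-1)^2=\prod_{r\mid L/d}\Phi_r(Z)^2 .
\]
Distinct cyclotomic polynomials are coprime over $\CC$, because their root sets are primitive roots of unity of different orders. Each $\Phi_r$ is squarefree over $\CC$ and has no root at $Z=0$. So in $\CC[Z^{\pm1}]$ the irreducible factors are the linear factors $Z-\zeta$ with $\zeta$ a root of unity, and none of them is a unit. For each $d$, the factor $Z-\zeta$ with $\zeta$ of exact order $r$ appears in $(Z^{L/d}-1)^2$ with multiplicity $2$ if $r\mid L/d$, and with multiplicity $0$ otherwise.

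Taking the maximum over $d\in S$, the lcm contains $Z-\zeta$ with exponent $2$ exactly when $r\in\mathscr C(S)$. Regrouping these linear factors by order gives $\prod_{r\in\mathscr C(S)}\Phi_r(Z)^2$, which is \eqref{eq:Pcyclo}.

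For the addendum, suppose $1\in S$. Then $L/1=L$ and every $L/d$ divides $L$, so $\mathscr C(S)$ is the set of all divisors of $L$, and the product equals $(Z^L-1)^2$. Multiplying by the unit $-Z^{-L}$ gives $-(Z^L-2+Z^{-L})$, which is $\Delta_1$ up to sign, since $Z^L=T_h$. The only real subtlety I see is bookkeeping: checking that the lcm in the Laurent ring matches the polynomial one. This holds because no cyclotomic factor involves $Z$ itself.
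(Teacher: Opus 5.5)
Your proposal is correct and follows the paper's proof. Divisibility by each $(Z^{L/d}-1)^2$ forces divisibility by their lcm, which is itself denominator clearing, and the factorization $Z^m-1=\prod_{r\mid m}\Phi_r(Z)$ then gives the exponent-two product over $\mathscr C(S)$. Your added care (the PID structure of $\CC[Z^{\pm1}]$, ``least'' versus ``minimal,'' and the coprimality of distinct $\Phi_r$ with no root at $0$) makes explicit what the paper leaves implicit; in your addendum, note that $Z^{-L}(Z^L-1)^2$ is already exactly $\Delta_1$, so no sign is needed.
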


\begin{proof}
Any denominator clearing operator must be divisible by $(Z^{L/d}-1)^2$ for every $d\in S$, hence by their least common multiple. Conversely that least common multiple satisfies every required divisibility condition and is therefore minimal.

For the cyclotomic factorization, use
\[
Z^m-1=\prod_{r\mid m}\Phi_r(Z).
\]
Thus
\[
(Z^{L/d}-1)^2=\prod_{r\mid L/d}\Phi_r(Z)^2.
\]
Taking the least common multiple over $d\in S$ retains precisely those factors with $r\mid L/d$ for at least one $d\in S$, namely $r\in\mathscr C(S)$, each with exponent two.
\end{proof}

\begin{corollary}[Primitive degree simplification]\label{cor:primitive}
If $1\in S$, then, up to a Laurent unit,
\begin{equation}\label{eq:Pprimitive}
\boxed{P_S^{\min}(Z)=(Z^L-1)^2.}
\end{equation}
\end{corollary}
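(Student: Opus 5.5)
The plan is to deduce the corollary directly from \cref{thm:cyclotomic} by showing that, when $1\in S$, the degree $d=1$ divisibility condition already implies all the others.

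First, since $1\in S$, the set $\mathscr C(S)$ contains $\{r\ge1:r\mid L/1\}=\{r:r\mid L\}$. Next, for any $d\in S$ we have $d\mid L$, so $L/d$ divides $L$. Hence every divisor $r$ of $L/d$ is also a divisor of $L$, and therefore
\[
\mathscr C(S)=\{r\ge1:r\mid L\}.
\]
Substituting this into \eqref{eq:Pcyclo} and using $Z^L-1=\prod_{r\mid L}\Phi_r(Z)$ gives $P_S^{\min}(Z)=(Z^L-1)^2$ up to a Laurent unit.

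The same conclusion can be reached without cyclotomic polynomials. Write $L=(L/d)\cdot d$; then $Z^{L/d}-1$ divides $Z^L-1$. Consequently $(Z^{L/d}-1)^2\mid(Z^L-1)^2$ for each $d\in S$, so the $\lcm$ in \eqref{eq:Pmin} is attained at the term $d=1$.

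Finally, for the symmetric representative $\Delta_1$ mentioned in \cref{thm:intro-cyclo}, the identification is
\[
(Z^L-1)^2=Z^L\,(Z^L-2+Z^{-L}),
\]
together with $Z^L=T_{h/L}^{\,L}=T_h$. This gives $Z^L-2+Z^{-L}=\Delta_1$, and $Z^L$ is a Laurent unit.

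No step is a real obstacle. The only point needing care is checking that the divisor sets nest because $d\mid L$ for all $d\in S$; once that is in place, the rest is a routine unit bookkeeping.
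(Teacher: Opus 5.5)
Your proposal is correct, and your second argument ($L/d\mid L$ gives $Z^{L/d}-1\mid Z^L-1$, so the $\lcm$ in \eqref{eq:Pmin} is attained at the $d=1$ term) is exactly the paper's proof. The cyclotomic route via $\mathscr C(S)=\{r\ge1: r\mid L\}$ is an equivalent restatement of the same divisibility through \eqref{eq:Pcyclo}.
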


\begin{proof}
The factor corresponding to $d=1$ is $(Z^L-1)^2$. For every $d\in S$, $L/d\mid L$, hence $Z^{L/d}-1\mid Z^L-1$.
\end{proof}

Since $Z^L=T_h$,
\begin{equation}\label{eq:symmetric-rep}
Z^{-L}(Z^L-1)^2=Z^L-2+Z^{-L}=T_h-2+T_{-h}.
\end{equation}

\begin{corollary}\label{cor:skeleton}
If $1\in S$, the minimal denominator clearing operator has the symmetric central difference representative
\[
\boxed{\Delta_1=T_h-2+T_{-h}.}
\]
Thus additional nonzero degrees need not enlarge the minimal symmetric difference skeleton.
\end{corollary}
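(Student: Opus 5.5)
The claim to prove is Corollary~\ref{cor:skeleton}: if $1\in S$, the minimal denominator clearing operator has the symmetric representative $\Delta_1=T_h-2+T_{-h}$. I will deduce it directly from Corollary~\ref{cor:primitive} and the identity \eqref{eq:symmetric-rep}.

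First, assume $1\in S$. By Corollary~\ref{cor:primitive}, $P_S^{\min}(Z)$ equals $(Z^L-1)^2$ up to a Laurent unit $cZ^r$. Multiplying by the particular unit $Z^{-L}$ stays inside the same equivalence class of Definition~\ref{def:denominator-clearing}. By \eqref{eq:symmetric-rep}, this gives $Z^{-L}(Z^L-1)^2=Z^L-2+Z^{-L}$.

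Next, I substitute $Z=T_{h/L}$ from \eqref{eq:Ldef}. Then $Z^{\pm L}=T_{\pm h}$, because shifts compose additively: $T_aT_b=T_{a+b}$. So the representative is exactly $T_h-2+T_{-h}=\Delta_1$. It is symmetric because it is invariant under $Z\mapsto Z^{-1}$, which is the same as $h\mapsto -h$.

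I will also check that this is the only symmetric normalized representative. Suppose $cZ^r(Z^L-1)^2$ is invariant under $Z\mapsto Z^{-1}$ and has constant coefficient $-2$. Invariance forces $r=-L$, and the constant coefficient then forces $c=1$. So the choice of representative is canonical.

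For the final sentence of the corollary, the point is that $P_S^{\min}$ does not depend on the other elements of $S$ once $1\in S$. By Corollary~\ref{cor:primitive} it is always $(Z^L-1)^2$, which as an operator is $\Delta_1$ regardless of $L$. Adding further degrees $d$ to $S$ may enlarge $L$, and hence refine the lattice variable $Z$. But $T_h=Z^L$ is independent of $L$, so the skeleton $\Delta_1$ stays the same.

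There is no real obstacle in this argument. The only point needing care is that the equivalence "up to a Laurent unit" allows multiplication by $Z^{-L}$, and that the identification $Z^L=T_h$ holds for every choice of $L$.
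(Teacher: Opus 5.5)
Your proposal is correct and follows the paper's own route. You apply Corollary~\ref{cor:primitive} to get $(Z^L-1)^2$ up to a Laurent unit, then multiply by the unit $Z^{-L}$ and use $Z^L=T_h$ to obtain $\Delta_1$, exactly as in \eqref{eq:symmetric-rep}; your extra check that $r=-L$, $c=1$ is the unique normalized symmetric representative is also correct, and slightly sharper than what the paper records.
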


\subsection{Gcd lattice versus fine lattice}

The two constructions answer different questions. The gcd lattice, governed by $g=\gcd S$, produces a single central difference equation. The fine lattice, governed by $L=\lcm(S)$, produces a minimal Laurent polynomial denominator clearing operator. For example, if $S=\{1,2\}$, then $g=1$ and $L=2$. The gcd lattice equation uses $\Delta_1$, whereas on the fine lattice $Z=T_{h/2}$,
\[
P_S^{\min}(Z)=(Z^2-1)^2.
\]
These differ only by the Laurent unit $Z^2$ because
\[
Z^{-2}(Z^2-1)^2=Z^2-2+Z^{-2}=\Delta_1.
\]
Thus the two viewpoints are compatible but conceptually distinct.

\subsection{The information hierarchy}

For a fixed exceptional ray, the preceding results yield
\begin{equation}\label{eq:hierarchy3}
\boxed{
\R_g\Longleftrightarrow R_g\Longleftrightarrow\{n_d\}
\Longrightarrow S\Longrightarrow P_S^{\min}.}
\end{equation}
The first three objects mutually determine one another by \cref{thm:information}. Passing from the spectrum to its support forgets multiplicities; passing from $S$ to $P_S^{\min}$ may forget further arithmetic information. The next section exhibits explicit flopping geometries realizing these losses. Moreover, even the full forcing need not determine the analytic type of the contraction.

\section{Explicit flops and information loss}\label{sec:examples}

We now apply the general results to explicit threefold flops. Throughout the examples below the degree one sector occurs, so $g=1$ and the common central difference operator is
\[
\Delta_1=T_h-2+T_{-h}.
\]

\subsection{The resolved conifold}

For the resolved conifold, $S=\{1\}$ and $n_1=1$. Hence
\[
\F_{\con}^{\loc}=\Phi_1,
\]
and
\begin{equation}\label{eq:conifold-diff}
\Delta_1\F_{\con}^{\loc}=D^2\Li_3(q)=\log(1-q).
\end{equation}
Writing $Z_{\con}^{\loc}=\exp(\F_{\con}^{\loc})$ gives
\begin{equation}\label{eq:conifold-Z}
\boxed{
\frac{Z_{\con}^{\loc}(t+h)Z_{\con}^{\loc}(t-h)}{Z_{\con}^{\loc}(t)^2}=1-q.}
\end{equation}

\subsection{The standard Laufer flop}

Consider an isolated length two example with local genus zero GV spectrum
\begin{equation}\label{eq:laufer-spectrum}
\boxed{(n_1,n_2)=(5,1).}
\end{equation}
This is the genus zero GV spectrum of the standard Laufer flop and also occurs in the Brown-Wemyss comparison; see \cite{BrownWemyss2018,Toda2015,CollinucciEtAl2022}. The local contribution is
\begin{equation}\label{eq:laufer-F}
\boxed{\F_L^{\loc}=5\Phi_1+\Phi_2.}
\end{equation}
Since $K_2(z)=z+2+z^{-1}$, \cref{thm:common} gives
\begin{equation}\label{eq:laufer-D2}
\boxed{
\Delta_1\F_L^{\loc}=D^2\left[
5\Li_3(q)+\frac14(T_{h/2}+2+T_{-h/2})\Li_3(q^2)
\right].}
\end{equation}
Using \eqref{eq:D2Li3},
\[
\begin{aligned}
\Delta_1\F_L^{\loc}
={}&5\log(1-q)
+\log(1-q^2e^{i\lambda})+2\log(1-q^2)\\
&+\log(1-q^2e^{-i\lambda}).
\end{aligned}
\]
Therefore
\begin{equation}\label{eq:laufer-log}
\boxed{
\Delta_1\F_L^{\loc}
=\log\!\left[(1-q)^5(1-q^2)^2(1-q^2e^{i\lambda})(1-q^2e^{-i\lambda})\right].}
\end{equation}
Equivalently,
\begin{equation}\label{eq:laufer-cos}
\Delta_1\F_L^{\loc}
=\log\!\left[(1-q)^5(1-q^2)^2(1-2q^2\cos\lambda+q^4)\right].
\end{equation}
If $Z_L^{\loc}=\exp(\F_L^{\loc})$, then
\begin{equation}\label{eq:laufer-Z}
\boxed{
\frac{Z_L^{\loc}(t+h)Z_L^{\loc}(t-h)}{Z_L^{\loc}(t)^2}
=(1-q)^5(1-q^2)^2(1-q^2e^{i\lambda})(1-q^2e^{-i\lambda}).}
\end{equation}

The classical forcing is
\begin{equation}\label{eq:laufer-R}
\boxed{R_L(q)=-5\Li_1(q)-4\Li_1(q^2).}
\end{equation}
The first coefficients are
\[
[q]R_L=-5,\qquad [q^2]R_L=-\frac{13}{2}.
\]
Thus $b_1=5$ and $b_2=13$. Equation \eqref{eq:moebius-b} yields
\[
n_1=5,\qquad 2^3n_2=b_2-b_1=8,
\]
recovering $n_2=1$.

Finally, $S_L=\{1,2\}$ and $L=2$, so
\begin{equation}\label{eq:laufer-P}
P_{S_L}^{\min}(Z)=(Z^2-1)^2,\qquad Z=T_{h/2},
\end{equation}
up to a Laurent unit. Its symmetric representative is again $\Delta_1$.

\subsection{The length three E6 model}

For the explicit length three $E_6$ model of \cite{CollinucciEtAl2022}, the local genus zero GV spectrum is
\begin{equation}\label{eq:E6-spectrum}
\boxed{(n_1,n_2,n_3)=(6,3,1).}
\end{equation}
Thus
\begin{equation}\label{eq:E6-F}
\boxed{\F_{E_6}^{\loc}=6\Phi_1+3\Phi_2+\Phi_3.}
\end{equation}
Using
\[
K_3(z)=z^2+2z+3+2z^{-1}+z^{-2},
\]
we obtain
\begin{equation}\label{eq:E6-D2}
\boxed{
\begin{aligned}
\Delta_1\F_{E_6}^{\loc}
=D^2\Big[&6\Li_3(q)+\frac34K_2(T_{h/2})\Li_3(q^2)\\
&+\frac19K_3(T_{h/3})\Li_3(q^3)\Big].
\end{aligned}}
\end{equation}
Expanding the finite shifts gives
\begin{equation}\label{eq:E6-log}
\begin{aligned}
\Delta_1\F_{E_6}^{\loc}
={}&6\log(1-q)\\
&+3\log(1-q^2e^{i\lambda})+6\log(1-q^2)+3\log(1-q^2e^{-i\lambda})\\
&+\log(1-q^3e^{2i\lambda})+2\log(1-q^3e^{i\lambda})+3\log(1-q^3)\\
&+2\log(1-q^3e^{-i\lambda})+\log(1-q^3e^{-2i\lambda}).
\end{aligned}
\end{equation}
Therefore
\begin{equation}\label{eq:E6-Z}
\boxed{
\begin{aligned}
\frac{Z_{E_6}^{\loc}(t+h)Z_{E_6}^{\loc}(t-h)}{Z_{E_6}^{\loc}(t)^2}
={}&(1-q)^6\\
&\times(1-q^2e^{i\lambda})^3(1-q^2)^6(1-q^2e^{-i\lambda})^3\\
&\times(1-q^3e^{2i\lambda})(1-q^3e^{i\lambda})^2(1-q^3)^3\\
&\times(1-q^3e^{-i\lambda})^2(1-q^3e^{-2i\lambda}).
\end{aligned}}
\end{equation}
The classical forcing is
\begin{equation}\label{eq:E6-R}
\boxed{R_{E_6}(q)=-6\Li_1(q)-12\Li_1(q^2)-9\Li_1(q^3).}
\end{equation}
Its first coefficients are
\[
[q]R_{E_6}=-6,\qquad [q^2]R_{E_6}=-15,\qquad [q^3]R_{E_6}=-11.
\]
Thus $b_1=6$, $b_2=30$, $b_3=33$, and M\"obius inversion gives
\[
2^3n_2=30-6=24,\qquad 3^3n_3=33-6=27,
\]
recovering \eqref{eq:E6-spectrum}.

For $S_{E_6}=\{1,2,3\}$, $L=6$, and
\begin{equation}\label{eq:E6-P}
\boxed{
P_{E_6}^{\min}(Z)=(Z^6-1)^2
=\Phi_1(Z)^2\Phi_2(Z)^2\Phi_3(Z)^2\Phi_6(Z)^2,}
\end{equation}
up to a Laurent unit.

\subsection{Higher length E7 and E8 models}

The gauge theoretic constructions of \cite{CollinucciEtAl2022} produce explicit simple flops of lengths $1,\dots,6$ and allow the genus zero GV multiplicities to be read off. For the higher length representatives relevant here,
\begin{equation}\label{eq:higher-table}
\begin{array}{c|c|c}
\text{length} & \text{model} & \text{GV spectrum}\\
\hline
3 & E_6 &(6,3,1)\\
4 & E_7 &(6,5,2,1)\\
5 & E_8 &(8,6,4,2,1)\\
6 & E_8 &(6,6,4,3,2,1).
\end{array}
\end{equation}
These are particular explicit representatives; the length itself should not be interpreted as uniquely determining the GV multiplicities.

For every model in \eqref{eq:higher-table},
\begin{equation}\label{eq:higher-master}
\boxed{
\Delta_1\F^{\loc}
=D^2\sum_{d=1}^{\ell}\frac{n_d}{d^2}K_d(T_{h/d})\Li_3(q^d).}
\end{equation}
The higher length geometries successively realize the kernels
\[
K_3,\quad K_4,\quad K_5,\quad K_6.
\]
For reference,
\begin{equation}\label{eq:K4}
K_4(z)=z^3+2z^2+3z+4+3z^{-1}+2z^{-2}+z^{-3},
\end{equation}
\begin{equation}\label{eq:K5}
\begin{aligned}
K_5(z)={}&z^4+2z^3+3z^2+4z+5\\
&+4z^{-1}+3z^{-2}+2z^{-3}+z^{-4},
\end{aligned}
\end{equation}
and
\begin{equation}\label{eq:K6}
\begin{aligned}
K_6(z)={}&z^5+2z^4+3z^3+4z^2+5z+6\\
&+5z^{-1}+4z^{-2}+3z^{-3}+2z^{-4}+z^{-5}.
\end{aligned}
\end{equation}
The length six $E_8$ example therefore realizes the largest kernel among these simple flop models.

For the $E_7$ spectrum $(6,5,2,1)$,
\begin{equation}\label{eq:E7-D2}
\boxed{
\begin{aligned}
\Delta_1\F_{E_7}^{\loc}=D^2\Big[&6\Li_3(q)+\frac54K_2(T_{h/2})\Li_3(q^2)\\
&+\frac29K_3(T_{h/3})\Li_3(q^3)+\frac1{16}K_4(T_{h/4})\Li_3(q^4)\Big],
\end{aligned}}
\end{equation}
with classical forcing
\begin{equation}\label{eq:E7-R}
\boxed{R_{E_7}(q)=-6\Li_1(q)-20\Li_1(q^2)-18\Li_1(q^3)-16\Li_1(q^4).}
\end{equation}
For the length five $E_8$ spectrum $(8,6,4,2,1)$,
\begin{equation}\label{eq:E85-D2}
\boxed{
\begin{aligned}
\Delta_1\F_{E_8^{(5)}}^{\loc}=D^2\Big[&8\Li_3(q)+\frac32K_2(T_{h/2})\Li_3(q^2)\\
&+\frac49K_3(T_{h/3})\Li_3(q^3)+\frac18K_4(T_{h/4})\Li_3(q^4)\\
&+\frac1{25}K_5(T_{h/5})\Li_3(q^5)\Big],
\end{aligned}}
\end{equation}
and
\begin{equation}\label{eq:E85-R}
\boxed{
\begin{aligned}
R_{E_8^{(5)}}(q)={}&-8\Li_1(q)-24\Li_1(q^2)-36\Li_1(q^3)\\
&-32\Li_1(q^4)-25\Li_1(q^5).
\end{aligned}}
\end{equation}
For the length six $E_8$ spectrum $(6,6,4,3,2,1)$,
\begin{equation}\label{eq:E86-D2}
\boxed{
\begin{aligned}
\Delta_1\F_{E_8^{(6)}}^{\loc}=D^2\Big[&6\Li_3(q)+\frac32K_2(T_{h/2})\Li_3(q^2)\\
&+\frac49K_3(T_{h/3})\Li_3(q^3)+\frac3{16}K_4(T_{h/4})\Li_3(q^4)\\
&+\frac2{25}K_5(T_{h/5})\Li_3(q^5)+\frac1{36}K_6(T_{h/6})\Li_3(q^6)\Big],
\end{aligned}}
\end{equation}
and
\begin{equation}\label{eq:E86-R}
\boxed{
\begin{aligned}
R_{E_8^{(6)}}(q)={}&-6\Li_1(q)-24\Li_1(q^2)-36\Li_1(q^3)\\
&-48\Li_1(q^4)-50\Li_1(q^5)-36\Li_1(q^6).
\end{aligned}}
\end{equation}

\subsection{Same length, different forcing}

The GV spectrum is finer than Koll\'ar's length. The length two family considered in \cite{CollinucciEtAl2022} exhibits a generic spectrum $(4,1)$ and a special spectrum $(5,1)$. Their classical forcing terms are
\begin{equation}\label{eq:length2-generic}
R_{(4,1)}=-4\Li_1(q)-4\Li_1(q^2)
\end{equation}
and
\begin{equation}\label{eq:length2-special}
R_{(5,1)}=-5\Li_1(q)-4\Li_1(q^2),
\end{equation}
respectively. Hence
\begin{equation}\label{eq:length-not-force}
\boxed{
\text{same Koll\'ar length}\not\Longrightarrow\text{same local difference forcing}.}
\end{equation}
Thus the forcing detects enumerative information invisible to the length alone.

\subsection{Different supports with the same cyclotomic skeleton}

The length five and length six $E_8$ models provide a complementary phenomenon. Their supports are
\[
S_5=\{1,2,3,4,5\},\qquad S_6=\{1,2,3,4,5,6\},
\]
so $S_5\ne S_6$. However,
\begin{equation}\label{eq:lcm60}
\lcm(S_5)=60=\lcm(S_6).
\end{equation}
Let $Z=T_{h/60}$. Since $1\in S_5\cap S_6$, \cref{cor:primitive} gives
\begin{equation}\label{eq:E8sameP}
\boxed{
P_{S_5}^{\min}(Z)=P_{S_6}^{\min}(Z)=(Z^{60}-1)^2}
\end{equation}
up to Laurent units.

\begin{corollary}\label{cor:same-skeleton}
There exist explicit simple flop models with different GV supports but identical fine lattice minimal denominator clearing operators. In particular,
\[
\boxed{P_S^{\min}\not\Longrightarrow S.}
\]
\end{corollary}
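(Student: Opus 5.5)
The plan is to exhibit the length five and length six $E_8$ models from \eqref{eq:higher-table} as the required pair, check that their supports differ, and then show that their minimal denominator clearing operators agree. For the first step, read off the spectra $(8,6,4,2,1)$ and $(6,6,4,3,2,1)$ from \eqref{eq:higher-table}. Every listed multiplicity is nonzero, so the supports are exactly $S_5=\{1,\dots,5\}$ and $S_6=\{1,\dots,6\}$. These are different, since $6\in S_6\setminus S_5$; equivalently, $n_6$ vanishes for one model and not for the other.

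Next, compute the fine lattice. For $S_5$ we have $L=\lcm(1,\dots,5)=60$, and for $S_6$ we have $L=\lcm(60,6)=60$, which is \eqref{eq:lcm60}. Both models therefore use the same fine shift operator $Z=T_{h/60}$. This point matters, because the comparison of the operators only makes sense once both are written in the same indeterminate. Since $1$ lies in both supports, \cref{cor:primitive} applies to each and gives $P^{\min}_{S_5}(Z)=P^{\min}_{S_6}(Z)=(Z^{60}-1)^2$ up to Laurent units, which is \eqref{eq:E8sameP}.

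As a cross-check I would also use the cyclotomic description from \cref{thm:cyclotomic}. Because $1$ is in each support, both $\mathscr C(S_5)$ and $\mathscr C(S_6)$ equal the full set of divisors of $60$. Hence the cyclotomic skeletons $\prod_{r\mid 60}\Phi_r(Z)^2$ coincide.

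Putting these together gives a map $S\mapsto P_S^{\min}$ that sends two distinct supports arising from explicit simple flops to the same class modulo Laurent units. So $P_S^{\min}$ cannot determine $S$. No step here is a serious obstacle. The only point needing care is to note explicitly that the equality takes place on a common lattice: it is the equality of the two values of $L$ that justifies comparing $P^{\min}_{S_5}$ and $P^{\min}_{S_6}$ as elements of the same ring $\CC[Z^{\pm1}]$. Geometric existence of the models is taken from \cite{CollinucciEtAl2022}.
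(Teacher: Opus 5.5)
Your proposal is correct and follows the paper's own argument. The paper uses the same length five and length six $E_8$ models, notes $\lcm(S_5)=60=\lcm(S_6)$, and applies \cref{cor:primitive} (since $1$ lies in both supports) to get $(Z^{60}-1)^2$ with $Z=T_{h/60}$. Your cyclotomic cross-check via $\mathscr C(S)$ and your remark about comparing both operators on the common lattice are harmless additions.
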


The forcing terms nevertheless distinguish these examples, since \eqref{eq:E85-R} and \eqref{eq:E86-R} are different. Thus the passage from forcing data to the cyclotomic skeleton is genuinely information losing in geometric examples.

\subsection{Identical forcing does not determine the flop}

Brown and Wemyss constructed two threefold flops, each with a single flopping curve, which are neither algebraically nor analytically isomorphic but have identical curve counting GV invariants \cite{BrownWemyss2018}. After identifying the exceptional curve classes, \cref{thm:information} therefore gives identical full and classical forcing terms.

\begin{corollary}\label{cor:not-analytic}
The local Gromov-Witten difference forcing does not determine the analytic type of a threefold flop. More precisely, there exist analytically non-isomorphic flops with identical
\[
\boxed{
\R_g,\qquad R_g,\qquad\{n_d\},\qquad S,\qquad P_S^{\min}.}
\]
\end{corollary}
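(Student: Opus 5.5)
The plan is to combine the Brown--Wemyss input with the information hierarchy \eqref{eq:hierarchy3} already established in Section~\ref{sec:arithmetic}. The only geometric input is the existence result of \cite{BrownWemyss2018}: there are two threefold flops $X\to X_{\con}$ and $X'\to X'_{\con}$, each contracting a single rational curve $C$, $C'$, which are not analytically (or algebraically) isomorphic, yet whose genus zero GV invariants coincide in every curve class. I would first fix the identification of exceptional rays by sending the class of $C$ to the class of $C'$. Since each contraction has a single curve, the local curve classes lie on one ray, and the primitive class is the curve class itself. This identification is compatible with degree, so $n_d=n'_d$ for all $d\ge1$.

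Next I would read off the lattice data. The supports agree, $S=\{d:n_d\ne0\}=\{d:n'_d\ne0\}=S'$, so both $g=\gcd S$ and $L=\lcm S$ are the same for the two flops. (In the Laufer-type examples $1\in S$, so $g=1$, but this is not needed.) With the same normalization $g$, Corollary~\ref{cor:equality}, or equivalently Theorem~\ref{thm:information} applied through \eqref{eq:forcing-from-spectrum}, gives $\R_g=\R'_g$ and hence $R_g=R'_g$ by specialization at $\lambda=0$. For the last entry, $P_S^{\min}$ depends only on $S$ by Theorem~\ref{thm:cyclotomic}, so $P_S^{\min}=P_{S'}^{\min}$ up to a Laurent unit. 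All five listed data therefore coincide, while the flops are analytically non-isomorphic, which proves the corollary.

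The one delicate step is making sure the Brown--Wemyss equality of GV invariants is genuinely an equality of the local spectra $\{n_d\}$ entering Proposition~\ref{prop:decomp}. This is an equality under the stated identification of curve classes, and the lattice normalization $g$ must be the same on both sides; Remark~\ref{rem:g-normalization} shows this is part of the data. I would address this by noting that for a single flopping curve the GV invariants of \cite{BrownWemyss2018} are exactly Toda's/Katz's invariants $n_d$ of multiples of the curve class, which equal the BKL multiplicities. Everything after that step is formal.
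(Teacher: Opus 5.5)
Your proposal is correct and follows essentially the same route as the paper: it invokes the Brown--Wemyss pair of single-curve flops with identical GV invariants, identifies the exceptional curve classes so that $n_d=n'_d$ for all $d$, and applies Theorem~\ref{thm:information} (equivalently Corollary~\ref{cor:equality}) to obtain equal $\R_g$ and $R_g$, with $S$ and $P_S^{\min}$ then determined by the common spectrum. Your remark that the Brown--Wemyss GV invariants coincide with the BKL multiplicities entering Proposition~\ref{prop:decomp} (via Katz and Toda) states explicitly a point the paper assumes without comment.
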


Thus the local curve counting difference data considered here remain strictly coarser than the analytic geometry of the contraction. This is consistent with the fact that contraction algebraic data can distinguish flops that curve counting invariants do not \cite{DonovanWemyss2016,HuaToda2017,Toda2015}.

\subsection{Three levels of information loss}

Combining the preceding examples yields
\begin{equation}\label{eq:three-losses}
\boxed{
\begin{array}{rcl}
\text{same Koll\'ar length}&\not\Longrightarrow&\text{same forcing},\\[1mm]
\text{same cyclotomic skeleton}&\not\Longrightarrow&\text{same GV support},\\[1mm]
\text{same forcing}&\not\Longrightarrow&\text{same analytic flop}.
\end{array}}
\end{equation}
For the fixed exceptional ray,
\[
\R_g\Longleftrightarrow R_g\Longleftrightarrow\{n_d\}
\]
contains precisely the same genus zero enumerative information. The support $S$ is coarser, and the cyclotomic operator $P_S^{\min}$ is coarser still. At the same time, the entire local curve counting package is not fine enough to recover the analytic type. Thus
\begin{equation}\label{eq:final-hierarchy}
\boxed{
\text{analytic flop geometry}
\longrightarrow
\bigl(\R_g\Longleftrightarrow R_g\Longleftrightarrow\{n_d\}\bigr)
\longrightarrow S\longrightarrow P_S^{\min},}
\end{equation}
with the failures of reverse implication realized by the examples above.

\section{Discussion and outlook}\label{sec:discussion}

The results above separate three roles of local Gromov-Witten difference equations. First, the common gcd lattice equation packages all nonzero degree sectors into one functional relation. Second, its forcing term is a complete reconstruction device for the genus zero local GV spectrum: passing to $\lambda=0$ removes the quantum shift profile but does not lose the curve counting multiplicities. Third, after exponentiation, the classical forcing has a direct interpretation in contraction algebra geometry through Proposition~\ref{prop:contraction-bridge}.

This reconstruction statement should be distinguished from the stronger problem of recovering the analytic type of the contraction. The Brown-Wemyss examples show that the complete local GV package, and hence the full forcing considered here, remains too coarse for that purpose. Conversely, the cyclotomic denominator clearing operator is intentionally coarser than the GV spectrum and records only part of the arithmetic support. The resulting hierarchy makes precise where information is preserved and where it is lost.

The recent resurgent analysis of the resolved conifold by Alim and Tischler \cite{AlimTischler2026} provides a complementary perspective. They construct a global analytic, nonperturbative completion in the topological string coupling whose weak coupling asymptotics recover the closed Gromov-Witten expansion, while its strong coupling expansion in dual variables contains a Nekrasov-Shatashvili contribution with a precise interpretation in terms of relative Gromov-Witten invariants of \(\mathbb P^1\) with maximal tangency at \(\infty\). Thus a single analytic object interpolates between two a priori different curve counting theories. It would be interesting to understand whether the multi-sector difference forcing studied here admits an analogous resurgent or nonperturbative completion, and, if so, whether its strong coupling sectors have a natural relative, logarithmic, or open Gromov-Witten interpretation.

Two further extensions are suggested directly by the reconstruction mechanism. First, the genus zero hypothesis is essential to the information equivalence theorem in its present form. If
\[
\Phi_{r,d}(\lambda,t):=\sum_{k\ge1}\frac{q^{dk}}{k}
\left(2\sin\frac{k\lambda}{2}\right)^{2r-2},\qquad r\ge0,
\]
denotes the contribution of a fixed genus $r$ BPS sector, then the same elementary shift computation gives
\begin{equation}\label{eq:genus-raising-outlook}
\Delta_d\Phi_{r,d}=-\Phi_{r+1,d}.
\end{equation}
Thus central difference acts as a genus-raising operator on the GV multiple cover factors. In particular, unlike the genus zero situation studied here, the nontrivial $\lambda$-dependence of the forcing is expected to carry genuinely new higher genus BPS information rather than being redundant after specialization at $\lambda=0$. This suggests an all genus reconstruction problem combining divisor inversion in the curve degree with a triangular reconstruction in the genus variable.

Second, an orbifold analogue should naturally be sector-valued rather than scalar. Bryan and Pietromonaco recently defined integral orbifold Gopakumar-Vafa invariants for local Calabi-Yau orbifolds with transverse $A_N$ singularities and proved a finiteness property \cite{BryanPietromonaco2026}. In such a setting one expects the divisor structure of multiple covers to interact with isotropy characters and twisted sectors, suggesting a reconstruction theory combining divisor inversion with finite Fourier data. We do not develop either extension here. 

In conclusion, the scope of the present paper is deliberately focused on the scalar genus zero theory associated with a single exceptional ray. Our aim is to understand the reconstruction of local Gopakumar-Vafa data from difference forcing, the associated denominator clearing problem, its relation to contraction algebra widths, and the resulting hierarchy of information loss. Extensions beyond this setting involve additional geometric and analytic structures and will not be pursued here.

\section*{Acknowledgement} Xiaobin Li is supported by NSFC grant No.11501470, No.11426187, No.11791240561, and partially supported by NSFC grant No.11671328, Chengdu Science and Technology Program with Grant No.2025-YF09-00007-SN and the Fundamental Research Funds for the Central Universities 2682021ZTPY043, 2682025ZTPY001, 2682025ZTPY057, 2682025ZTO002. Xiaobin Li thanks Sung-Soo Kim, Kimyeong Lee, Futoshi Yagi, Satoshi Nawata and Rui-Dong Zhu for useful discussions and collaborations in physics. In particular, Xiaobin Li is grateful to Arpan Saha for his inspiring online seminars delivered during the pandemic, and to Doan Nhat Minh and Van Nguyen for the enjoyable and fruitful collaboration. Xiaobin Li would also like to thank Bohui Chen, An-min Li, Guosong Zhao for their constant support and all friends met at various conferences. Finally, Xiaobin Li expresses special gratitude to the Mainz Institute for Theoretical Physics (MITP) of the Cluster of Excellence PRISMA\(^{+}\) (Project ID 390831469), for its hospitality and support.

\end{document}